\newtheorem{theorem}{Theorem}
\newtheorem{lemma}[theorem]{Lemma}
\newtheorem{claim}{Claim}[theorem]
\theoremstyle{definition}    
\newtheorem{definition}[theorem]{Definition}
\DeclareMathOperator{\ch}{ch}
\DeclareMathOperator{\cro}{cr}
\begin{document}

\title{ 5-choosability of graphs with 2 crossings
\thanks{This work was partially supported by Equipe Associ\'ee EWIN.}}

\author{
Victor Campos\footnote{ Universidade Federal do Cear\'a, Departamento de Computa\c{c}ao,
Bloco 910, Campus do Pici, Fortaleza, Cear\'a, CEP 60455-760, Brasil.
{\tt campos@lia.ufc.br}; Partially supported by CNPq/Brazil.
}
\and
Fr\'ed\'eric Havet
\footnote{Projet Mascotte, I3S(CNRS, UNSA) and INRIA,
2004 route des lucioles, BP 93, 06902 Sophia-Antipolis Cedex, France.
{\tt Frederic.Havet@inria.fr}; Partially supported by the ANR Blanc International ANR-09-blan-0373-01.}  
}

\date{\today}

\maketitle

\noindent
{\bf Keywords:} List colouring; Choosability; Crossing number \\

%%%%%%%%%%%%%%%%%%%%%%%%%%%%%%%%%%%%%%%%%%%%%%%%%%%%%%%%%%%%%%%%%%%%%5

\begin{abstract}
We show that every graph with two crossings is $5$-choosable.
We also prove that every graph which can be made planar by removing one edge is $5$-choosable.
\end{abstract}

%
%
%  ------------------  INTRO ----------------------------
% 
%

\section{Introduction}
%%%%%%%%%%%%%%%%%%%%%%%

The crossing number of a graph $G$, denoted by $\cro(G)$, is the minimum number of crossings
in any drawing of $G$ in the plane.

The Four Colour Theorem states that, if a graph has crossing number zero (i.e. is planar), then it is $4$-colourable.
Deleting one vertex per crossing, it follows that $\chi(G)\leq 4 +cr(G)$.
So it is natural to ask for the smallest integer $f(k)$ such that every graph $G$ with crossing number at most $k$ is $f(k)$-colourable?
Settling a conjecture of Albertson~\cite{Alb08}, 
Schaefer~\cite{Sch} showed that $f(k)=O\left (k^{1/4}\right )$. This upper bound is tight up to a constant factor since $\chi(K_n)=n$ and
$\cro(K_n)\leq {|E(K_n)| \choose 2} =  \binom{{n\choose 2}}{2}\leq \frac{1}{8}n^4$.

The values of $f(k)$ are known for a number of small values of $k$.
The Four Colour Theorem states $f(0)=4$ and implies easily that $f(1)\leq 5$.
Since $\cro(K_5)=1$, we have $f(1)=5$.
Oporowski and Zhao~\cite{OpZh} showed that $f(2)=5$.
Since $\cro(K_6)=3$, we have $f(3)=6$. Further, Albertson et al. \cite{AHMW}
showed that $f(6)=6$. Albertson then conjectured that if $\chi(G)=r$, then $\cro(G)\leq \cro(K_r)$.
This conjecture was proved by Bar\'at and T\'oth~\cite{BaTo10} for $r\leq 16$.

A \emph{list assignment} of a graph $G$ is a function $L$ that assigns to each vertex 
$v \in V(G)$ a list $L(v)$ of available colours. An \emph{$L$-colouring} is a function 
$\varphi: V(G) \rightarrow \bigcup_v L(v)$ such 
that $\varphi(v) \in L(v)$ for every $v \in V(G)$ and
$\varphi(u) \neq \varphi(v)$ whenever $u$ and $v$ are adjacent vertices 
of $G$. If $G$ admits an $L$-colouring, then it is \emph{$L$-colourable}. 
A graph $G$ is \emph{$k$-choosable} if it is $L$-colourable for
every list assignment $L$ such that $|L(v)| \ge k$ for all $v \in V(G)$.
The \emph{choose number} of $G$, denoted by  $\ch(G)$, is the minimum $k$ 
such that $G$ is $k$-choosable.

Similarly to the chromatic number, one may seek for bounds on the choose number 
of a graph with few crossings or with independent crossings.
Thomassen's Five Colour Theorem~\cite{Tho94} states that if a graph has crossing number zero (i.e. is planar) then it is $5$-choosable.
A natural question is to ask whether the chromatic number is bounded in terms of its crossing number.
Erman et al.~\cite{EHLP11} observed that Thomassen's result can be extended to graphs with crossing
number at most $1$. Deleting one vertex per crossing yields $\ch(G)\leq 4 +cr(G)$.
Hence, what is the smallest integer $g(k)$ such that every graph $G$ with crossing number at most $k$ is $g(k)$-choosable?
Obviously, since $\chi(G)\leq \ch(G)$, we have $f(k)\leq g(k)$.

In this paper, we extend Erman et al. result in two ways.
We first show that every graph which can be made planar by the removal of an edge is $5$-choosable (Theorem~\ref{cr1choos}).
We then prove that is $g(2)=5$. In other words, every graph with crossing number $2$  is $5$-choosable\footnote{While writing this paper, we discovered that Dvo\v{r}\'ak et al.~\cite{DLR} independently proved this result.
Their proof has some similarity to ours but is different. They prove by induction a stronger result, while we
use the existence of a shortest path between the two crossings which satisfies some given properties.}. This generalizes the result of Oporowski and Zhao~\cite{OpZh} to list colouring.

\section{Planar graphs plus an edge} 
%%%%%%%%%%%%%%%%%%%%

In order to prove its Five Colour Theorem, Thomassen~\cite{Tho94} showed  a stronger result.

\begin{definition}\rm
An {\it  inner triangulation} is a plane graph such that 
every face of $G$ is bounded by a triangle except its outer face which is bounded by a cycle.

Let $G$ be a plane graph and $x$ and $y$ two consecutive vertices on its outer face $F$.
A list assignment $L$ of $G$ is {\it $\{x,y\}$-suitable} if
\begin{itemize}
\item[-] $|L(x)|\geq 1$, $|L(y)|\geq 2$,
\item[-] for every $v\in V(F)\setminus \{x,y\}$, $|L(v)|\geq 3$, and
\item[-] for every $v\in V(G)\setminus V(F)$,   $|L(v)|\geq 5$.
\end{itemize}
\end{definition}

A list assignment of $G$ is {\it suitable} if it is $\{x,y\}$-suitable for some vertices $x$ and $y$ on the outer face of $G$.

The following theorem is a straightforward generalization of Thomassen's five colour Theorem which holds for non-separable plane graphs.

\begin{theorem}[Thomassen~\cite{Tho94}]\label{suitable}
If $L$ is a suitable list assignment of a plane graph $G$ then $G$ is
$L$-colourable.
\end{theorem}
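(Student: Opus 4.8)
The plan is to follow Thomassen's original argument: first move to a precolouring–extension statement, then induct on the number of vertices. First I would reduce to the \emph{precoloured} case: given an $\{x,y\}$-suitable assignment $L$, choose $a\in L(x)$ and $b\in L(y)\setminus\{a\}$ (possible since $|L(y)|\geq 2$), and replace $L(x),L(y)$ by the singletons $\{a\},\{b\}$; any colouring for these smaller lists is an $L$-colouring. Next I would reduce to the case where $G$ is a $2$-connected inner triangulation with outer cycle $C$ and $xy\in E(C)$: a separable $G$ is handled through its block decomposition, and inside every non-triangular inner face one can add edges without leaving the plane, without moving $x,y$ off consecutive positions of $C$, and without making the colouring problem easier. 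So it suffices to prove: if $G$ is an inner triangulation with outer cycle $C$, $xy\in E(C)$, $L(x)=\{a\}$, $L(y)=\{b\}$ with $a\neq b$, $|L(v)|\geq 3$ for $v\in V(C)\setminus\{x,y\}$ and $|L(v)|\geq 5$ for $v\notin V(C)$, then $G$ is $L$-colourable.

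I would prove this by induction on $|V(G)|$. The base case $|V(G)|=3$ is immediate: colour $x$ and $y$ as forced, then the third vertex, whose list of size at least $3$ still has a free colour. For the inductive step, if $C$ has a chord $uv$, it splits $G$ into two inner triangulations $G_1,G_2$ sharing exactly the edge $uv$, with $xy\in E(G_1)$; apply induction to $G_1$ with precoloured pair $x,y$, then apply induction to $G_2$ with precoloured pair $u,v$ (their colours in the $G_1$-colouring differ since $uv$ is an edge), and glue the two colourings.

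If $C=v_1v_2\cdots v_n$ is chordless with $v_1=x$, $v_2=y$, I would look at $v_n$, the neighbour of $x$ on $C$ other than $y$. Chordlessness forces the neighbours of $v_n$, in rotational order, to be $v_1,w_1,\dots,w_m,v_{n-1}$ with $w_1,\dots,w_m\notin V(C)$ and (unless $n=3$) $m\geq 1$, and all of $v_1w_1,w_1w_2,\dots,w_mv_{n-1}$ are edges. Choosing two colours $c,c'\in L(v_n)\setminus\{a\}$ (possible as $|L(v_n)|\geq 3$), set $G'=G-v_n$ — again an inner triangulation, with outer cycle $v_1v_2\cdots v_{n-1}w_mw_{m-1}\cdots w_1v_1$ — and let $L'$ agree with $L$ except $L'(w_i)=L(w_i)\setminus\{c,c'\}$ for $1\leq i\leq m$ (still of size at least $3$). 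Apply induction to $(G',L')$: in the resulting colouring every neighbour of $v_n$ except $v_{n-1}$ avoids both $c$ and $c'$, so at least one of $c,c'$ is free at $v_n$, and the colouring extends.

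I expect the main obstacle to be this chordless case: choosing the right vertex to delete and the right two colours to reserve so that the deleted vertex can always be recoloured at the end, together with the bookkeeping that $G-v_n$ is still an inner triangulation whose boundary is a genuine cycle and whose list assignment still meets the hypotheses — in particular that the former interior vertices $w_i$ now lying on the outer cycle still have lists of size at least $3$. The initial reductions are routine, and the chord case is a clean divide-and-conquer; verifying the local structure around $v_n$ from ``all inner faces are triangles'' is where the care is needed.
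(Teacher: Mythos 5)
Your argument is a correct reconstruction of Thomassen's original proof, which is exactly what the paper relies on here (the theorem is cited from \cite{Tho94} rather than reproved): reduce to a precoloured edge on the outer cycle of an inner triangulation, split along a chord if one exists, and otherwise delete the neighbour $v_n$ of $x$ on $C$ after reserving two colours of $L(v_n)\setminus L(x)$ from the lists of its interior neighbours. The reductions and the verification that $G-v_n$ is again an inner triangulation with the stated list sizes are routine and handled as you indicate, so the proposal is complete and matches the intended argument.
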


This result is the cornerstone of the following proof.

\begin{theorem}\label{cr1choos}
Let $G$ be a graph.
If $G$ has an edge such that $G\setminus e$ is planar 
then $\ch(G)\leq 5$.
\end{theorem}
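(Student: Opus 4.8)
The plan is to reduce the statement to Thomassen's Theorem~\ref{suitable} by exhibiting a suitable list assignment on a planar graph obtained from $G$. Let $e = xy$ be the edge such that $G' := G \setminus e$ is planar, and fix a plane embedding of $G'$. We may assume $G$ is connected (colour components separately) and, by adding edges if necessary, that we only make the problem harder; in particular we may assume $G'$ is $2$-connected — a standard reduction, since if $G$ has a cut vertex we can colour the blocks one at a time, reusing the colour at the cut vertex, and only one block contains both $x$ and $y$. Given a list assignment $L$ with $|L(v)| \ge 5$ for all $v$, we must produce an $L$-colouring of $G$; the only constraint in $G$ not present in $G'$ is $\varphi(x) \neq \varphi(y)$.

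The key step is a case analysis on the relative position of $x$ and $y$ in the embedding of $G'$. First I would handle the easy case where $x$ and $y$ lie on a common face $F$ of $G'$. Then I re-embed so that $F$ is the outer face, pick a colour $c \in L(x)$, colour $x$ with $c$, and delete $c$ from $L(y)$ (and from the lists of the neighbours of $x$ in $G'$, if desired, though this is not needed). Now on $G' \setminus x$ — or better, working directly on $G'$ — set $L(y)$ to have size at least $2$ after removing $c$, keep $|L(x)| \ge 1$, and observe that every other vertex still has a list of size $\ge 5 \ge 3$. Then $L$ restricted appropriately is $\{x,y\}$-suitable (after possibly subdividing or routing around to make $x,y$ consecutive on the outer cycle — since they share a face we may assume they are consecutive on it, or insert a dummy structure), so Theorem~\ref{suitable} gives the colouring, which is automatically proper for $e$ because $\varphi(x) = c \notin L(y) \ni \varphi(y)$.

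The hard case — and I expect this to be the main obstacle — is when $x$ and $y$ do not share a face of $G'$. Here the idea is to add the edge $e$ back with a single crossing: since $G' + e$ has crossing number at most $1$, any drawing of $G$ realizing this crossing has $e$ crossing exactly one edge $uv$ of $G'$. Introduce a new vertex $z$ at the crossing point, replacing $e$ by the path $x z y$ and $uv$ by the path $u z v$; this yields a genuinely planar graph $H$ on $V(G) \cup \{z\}$. Assign $z$ a list of size $5$ disjoint from all existing colours (or large enough). An $L$-colouring of $H$ need not yield one of $G$ because it does not force $\varphi(x) \neq \varphi(y)$; so instead I would colour greedily along a structure that controls both $x$ and $y$. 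Concretely, pick a shortest path (or just any path, using connectivity) $P$ from $x$ to $y$ in $G'$, colour $x$ first with some $c \in L(x)$, delete $c$ from the lists along $P$ so that when we reach $y$ its list still forbids $c$; after these deletions each internal vertex of $P$ has a list of size $\ge 4$ and $y$ has a list of size $\ge 4$ with $c$ removed. Then embed $G' \setminus x$ with a face containing $y$ and the relevant neighbours as outer face and invoke Theorem~\ref{suitable} on $G' \setminus x$ with $y$ playing the role of the distinguished vertex; the lists of size $\ge 4$ comfortably dominate the $\{1,2,3,5\}$ thresholds, and the colour $c$ having been excised from $L(y)$ guarantees $\varphi(y) \neq c = \varphi(x)$. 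The delicate point is ensuring, after deleting $x$ and re-embedding, that the outer face really can be taken to be bounded by a cycle through $y$ (non-separability) and that the list-size bookkeeping never drops a boundary vertex below $3$; the $2$-connectivity reduction and the slack of $5$ versus $3$ are exactly what make this go through.
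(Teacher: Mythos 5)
There is a genuine gap in your ``hard case,'' and it is exactly the case the theorem is about. Theorem~\ref{suitable} requires every vertex \emph{not} on the outer face to keep a list of size at least $5$. After you colour $x$ with $c$ and delete $c$ from $L(y)$ and from the lists of the internal vertices of your path $P$, all of those vertices have lists of size $4$, so the assignment is suitable only if they all lie on the outer face of your embedding of $G'\setminus x$ --- together with the neighbours of $x$, whose lists must also lose the colour $c$. In general no single face of $G'\setminus x$ contains both $y$ and all neighbours of $x$ (indeed in your hard case $y$ does not even lie on the face created by deleting $x$), so the hypothesis of Theorem~\ref{suitable} simply fails; the path-deletion device makes things worse by manufacturing more sub-$5$ lists in the interior. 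The root of the problem is that you reserve only \emph{one} colour for $x$ and then try to propagate the constraint $\varphi(y)\neq c$ to $y$ through the planar machinery, which gives you no handle on interior vertices.

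The paper's proof avoids this entirely with a two-colour reservation: extend $G\setminus e$ to a plane triangulation $G'$ with $x$ on the outer triangle, reserve \emph{two} colours $\alpha,\beta\in L(x)$, and delete both from the lists of all neighbours of $x$ in $G'$ --- these neighbours form the outer cycle of $G'-x$, so their reduced lists of size $\ge 3$ are exactly what a suitable assignment permits, while every other vertex keeps its full list of size $5$. Theorem~\ref{suitable} then colours $G'-x$, and at the very end $x$ receives whichever of $\alpha,\beta$ differs from $\varphi(y)$. No control over the position or list of $y$ is ever needed, and there is no case split on whether $x$ and $y$ share a face. (Your ``easy case'' is fine but over-engineered: if $x$ and $y$ share a face of $G'$ then $G$ itself is planar and Thomassen's theorem applies directly.) To salvage your argument you would need to import precisely this idea: give one endpoint of $e$ two reserved colours so that the choice avoiding the other endpoint can be made last.
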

\begin{proof}
Let $e=uv$ be an edge of $G$ such that $G\setminus e$ is planar.
Let $G'$ be a planar triangulation containing $G\setminus e$ as a subgraph.
Without loss of generality, we may assume that
$u$ is on the outer triangle of $G'$.
The graph $G'-u$ has an outer cycle $C'$ whose vertices are the neighbours of $u$ in $G'$. 

Let $L$ be a $5$-list assignment of $G$.
Let $\alpha, \beta\in L(u)$.
Let $L'$ be the list-assignment of $G'-u$ defined by 
$L'(w)=L(w)\setminus \{\alpha, \beta\}$ if
$w\in V(C')$ and $L'(w)=L(w)$ otherwise.
Then $L'$ is suitable.
So $G'-u$ admits an $L'$-colouring by Theorem~\ref{suitable}.
This colouring may be extended into an $L$-colouring of $G$ by assigning to $u$ a colour in $\{\alpha, \beta\}$ different from the colour of $v$.

Hence $G$ is $5$-choosable.
\end{proof}

\section{Graphs with two crossings}
%%%%%%%%%%%%%%%%%%

\subsection{Preliminaries}
%%%%%%%%%%%%%%%%%%%%%%%

We first recall the celebrated characterization of planar graphs due to Kuratowski~\cite{Kur30}.
See also \cite{Tho81} for a nice proof.

\begin{theorem}[Kuratowski~\cite{Kur30}]
A graph is planar if and only if it contains no minor  isomorphic to either $K_5$ or $K_{3,3}$.
\end{theorem}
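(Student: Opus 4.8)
The plan is to prove the two implications separately, the "only if" direction being an easy observation and the "if" direction — the substantial one — by induction on $|V(G)|+|E(G)|$, with a reduction to the $3$-connected case. For the "only if" direction I would note that planarity is preserved under taking minors: deleting a vertex or an edge from a plane drawing leaves it plane, and an edge of a plane drawing can be contracted within the drawing by sliding one endpoint to the other along the edge. Combined with the fact that $K_5$ and $K_{3,3}$ are themselves non-planar — by Euler's formula a simple plane graph on $n\ge 3$ vertices has at most $3n-6$ edges, and a simple bipartite one at most $2n-4$, while $|E(K_5)|=10>9$ and $|E(K_{3,3})|=9>8$ — this shows that a graph containing $K_5$ or $K_{3,3}$ as a minor cannot be planar.

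For the converse, suppose $G$ has neither $K_5$ nor $K_{3,3}$ as a minor; I would show $G$ is planar by induction on $|V(G)|+|E(G)|$, the small cases being trivial. First I would dispatch low connectivity. If $G$ is disconnected or has a cutvertex, I embed its blocks separately and assemble them. If $G$ has a $2$-separation with the two sides meeting in $\{x,y\}$, I add the edge $xy$ to each side; each resulting graph is a minor of $G$ (the path through the other side contracts to $xy$), hence has no $K_5$- or $K_{3,3}$-minor and is smaller, so by induction both sides are planar, and I glue the two plane drawings along the edge $xy$, deleting it afterwards if it was not originally present. This leaves the case where $G$ is $3$-connected with at least five vertices.

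In the $3$-connected case I would invoke the theorem of Tutte (see also Thomassen~\cite{Tho81}) that every $3$-connected graph on at least five vertices has an edge $e=xy$ whose contraction $H:=G/e$ is again $3$-connected. Since every minor of $H$ is a minor of $G$, $H$ is $3$-connected and contains no $K_5$- or $K_{3,3}$-minor, so by induction it is planar, and by Whitney's theorem its plane embedding is essentially unique; fix it. Let $z$ be the vertex of $H$ arising from the contraction. Because $H-z$ is $2$-connected, the face of $H-z$ that contained $z$ is bounded by a cycle $C$, and all neighbours of $z$ in $H$ — that is, all vertices of $(N_G(x)\cup N_G(y))\setminus\{x,y\}$ — lie on $C$. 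If $C$ can be split into two arcs meeting only at their ends, one containing $N_G(x)\setminus\{y\}$ and the other containing $N_G(y)\setminus\{x\}$, then I can re-insert $x$ and $y$ inside that face on the two respective sides and join them by $e$, producing a plane drawing of $G$. Otherwise the private neighbours of $x$ and of $y$ interleave around $C$, and a short case analysis — according to how many common neighbours $x$ and $y$ have on $C$ and to the cyclic order of their private neighbours — exhibits inside $G$ a subdivision of $K_{3,3}$ (with branch vertices roughly $x$ and two vertices of $C$ versus $y$ and two other vertices of $C$, linked by arcs of $C$) or of $K_5$ (when four neighbours of $z$ appear in the alternating cyclic order $a,b,a',b'$ with $x,y$ suitably adjacent), contradicting the hypothesis. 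Hence the "good" splitting always exists and $G$ is planar.

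The main obstacle is precisely this last case analysis in the $3$-connected step: checking rigorously that every interleaving pattern of the neighbours of $x$ and $y$ around $C$ forces a $K_5$- or $K_{3,3}$-subdivision in $G$. Getting every subcase right (in particular when $x$ and $y$ share two common neighbours on $C$, which is exactly when $K_5$ rather than $K_{3,3}$ shows up) is where the real work lies; the connectivity reductions and the recombination of plane drawings are routine bookkeeping by comparison. One should also recall, or prove as a preliminary lemma, that for the specific graphs $K_5$ and $K_{3,3}$ containing them as a minor is equivalent to containing them as a topological subdivision, so that the minors extracted above can be regarded concretely as subdivisions living inside $G$.
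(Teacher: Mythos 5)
The paper does not prove this statement: Kuratowski's theorem is quoted as a classical result, with the authors pointing to \cite{Tho81} for a proof, so there is no in-paper argument to compare yours against. Your outline is in fact essentially Thomassen's proof from that reference (reduction to the $3$-connected case via block decomposition and $2$-separations, Tutte's contractible-edge lemma, re-expansion of the contracted vertex inside a face of $H-z$), so the strategy is the right one and the connectivity reductions you describe are sound.

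However, as a proof the proposal has a genuine gap, and you identify it yourself: the entire content of the theorem in the $3$-connected case is the claim that if the neighbours of $x$ and $y$ cannot be separated into two arcs of the cycle $C$, then $G$ contains a $K_5$- or $K_{3,3}$-subdivision. You state that ``a short case analysis'' does this but never carry it out, and this is not routine bookkeeping: one must show that the only obstructions are (i) three common neighbours of $x$ and $y$ on $C$ (giving $K_5$ using $x$, $y$ and the three common neighbours, with arcs of $C$ as the connecting paths), (ii) two interleaved pairs of private neighbours $a,b,a',b'$ in cyclic order (giving $K_{3,3}$ with parts $\{x,a,a'\}$ and $\{y,b,b'\}$ up to relabelling), and the mixed cases, and that in every such configuration the required internally disjoint paths really are arcs of $C$ together with the edges from $x$, $y$ to $C$ and the edge $xy$. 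A second deferred ingredient is the preliminary lemma that having $K_5$ or $K_{3,3}$ as a minor is equivalent to containing a subdivision of one of them (the delicate point being that a $K_5$-minor whose model cannot be realised as a subdivision still forces a $K_{3,3}$-subdivision); you flag this but do not prove it, and the ``only if'' direction of your argument also silently uses it when passing between minors and subdivisions. Until these two pieces are written out, the proposal is a correct plan rather than a proof.
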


Let $G$ be a plane graph and $x$, $y$ and $z$ three distinct vertices on the outer face $F$ of $G$.
A list assignment $L$ of $G$ is {\it $(x,y,z)$-correct} if
\begin{itemize}
\item[-] $|L(x)|=1 = |L(y)|$ and $L(x)\neq L(y)$,
\item[-] $|L(z)|\geq 3$,
\item[-] for every $v\in V(F)\setminus \{x,y,z\}$, $|L(v)|\geq 4$, and
\item[-] for every $v\in V(G)\setminus V(F)$,   $|L(v)|\geq 5$.
\end{itemize}

If $L$ is $(x,y,z)$-correct and $L(z)\geq 4$, we say that $L$ is {\it $\{x,y\}$-correct}.

\begin{lemma}\label{nice}
Let $G$ be an inner triangulation and $x$ and $y$ two distinct vertices on the outer face of $G$.
If $L$ is an $(x,y,z)$-correct list assignment of $G$ then $G$ is $L$-colourable.
\end{lemma}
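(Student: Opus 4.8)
The plan is to prove the lemma by induction on $|V(G)|$, along the lines of Thomassen's proof of Theorem~\ref{suitable}. Write $L(x)=\{c_x\}$, $L(y)=\{c_y\}$ (so $c_x\neq c_y$), call a vertex with a list of size $1$ \emph{forced}, and let $C$ be the outer cycle. If $G$ is a triangle, colour $x,y$ by force and $z$ with one of its at least $|L(z)|-2\ge 1$ remaining colours. For the inductive step I distinguish, as Thomassen does, whether $C$ has a chord. A remark used everywhere: each reduction below either makes some generic boundary vertex adjacent to a just‑coloured vertex or reserves one colour on it for a later re‑extension, so the hypothesis $|L(v)|\ge 4$ on the generic boundary vertices (one more than Thomassen's $\ge 3$) is exactly what keeps the reduced instance correct.

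\emph{Case 1: $C$ has no chord.} Then $G-v$ is again an inner triangulation for every $v\in V(C)$. If $z$ is a $C$‑neighbour of $x$ or of $y$ — say of $x$, after possibly interchanging the names $x,y$ — delete $z$: writing the neighbours of $z$ cyclically as $x,w_1,\dots,w_k,z'$ with the $w_i$ interior and $z'$ the other $C$‑neighbour of $z$, pick $c\in L(z)\setminus\{c_x,c_y\}$, delete $c$ from $L(z')$ and from each $L(w_i)$, and apply the induction hypothesis to $G-z$ with forced vertices $x,y$ and third special vertex $z'$ (its list now has size $\ge 3$ if $z'$ was generic; if $z'=y$ it is still forced and any other boundary vertex, of list size $\ge 4$, plays the third role), the $w_i$ keeping lists of size $\ge 4$; then colour $z$ by $c$, which no neighbour of $z$ uses. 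Otherwise colour $x$ by $c_x$, delete it and remove $c_x$ from the former neighbours of $x$: now $y$ still has list size $1$, $z$ is untouched with list size $\ge 3$, the fan neighbours of $x$ have lists of size $\ge 4$ and the two former $C$‑neighbours of $x$ drop only to $\ge 3$, so with $y$ and a boundary neighbour of $y$ as the distinguished pair, $G-x$ carries a suitable list assignment and Theorem~\ref{suitable} applies; restoring the colour $c_x$ at $x$ finishes the case.

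\emph{Case 2: $C$ has a chord.} Split $G$ along a chord $e=uv$ into inner triangulations $G_1,G_2$ with $G_1\cap G_2=\{u,v,uv\}$, both smaller. Suppose first $e$ can be chosen so that $x$ and $y$ lie in the same piece, say both in $G_1$ (this is automatic whenever $e$ is incident to $x$ or to $y$, and more generally when $e$ does not separate $x$ from $y$; here $x$ or $y$ may coincide with $u$ or $v$). Colour $G_1$ first: it carries the forced vertices $x,y$ and a third special vertex ($z$ if $z\in V(G_1)$, otherwise any boundary vertex of $G_1$, of list size $\ge 4$), so the induction hypothesis applies; the colours thereby fixed on $u$ and $v$ differ, so $G_2$ then carries the two forced vertices $u,v$ with every other boundary vertex of list size $\ge 4$, and the induction hypothesis applies again. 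Gluing along $\{u,v\}$ colours $G$. If no such chord exists, every chord of $C$ separates $x$ from $y$ (so $x,y$ are non‑adjacent and no chord is incident to them); if moreover no chord is incident to $z$, we finish exactly as in Case 1 — delete $z$ and re‑extend if $z$ is a $C$‑neighbour of $x$ or $y$, and otherwise colour and delete $x$ and apply Theorem~\ref{suitable} to $G-x$.

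\emph{The hard case}, which I expect to be the main obstacle, is the remaining one: every chord of $C$ separates $x$ from $y$, but some chord $e=uv$ is incident to $z$ (which then still separates $x$ from $y$). Splitting along $e$, with $z$ on both boundaries, $x\in V(G_1)$ and $y\in V(G_2)$, the piece $G_1$ carries only the single forced vertex $x$; colour it by Theorem~\ref{suitable} after first deleting $c_y$ from $L(u)$ and from $L(v)$ whenever $u$, resp. $v$, is adjacent to $y$ (this keeps $G_1$'s assignment suitable and makes the colours now fixed on $u,v$ consistent with $c_y$). But then $G_2$ carries \emph{three} forced boundary vertices $u,v,y$ — one more than the statement allows. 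I would dispose of this by a companion induction on $|V(\cdot)|$ for inner triangulations with three forced boundary vertices whose colours are pairwise distinct on adjacent pairs and all other boundary vertices of list size $\ge 4$: if the outer cycle has a chord one splits off a smaller piece carrying at most two forced vertices — handled by the induction hypothesis of the lemma (two forced) or by Theorem~\ref{suitable} (one), and by a recursive call when all three stay together — and recurses on the remaining piece; if the outer cycle has no chord, colour and delete one of the three forced vertices, leaving a correct instance. Since $G_1$, $G_2$ and every piece produced are strictly smaller than $G$, the whole process terminates. The genuine work lies here, in choosing the chord and threading the three special vertices and the three forced colours consistently through the splits, which is precisely what the $|L(v)|\ge 4$ slack is for.
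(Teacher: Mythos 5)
Your Case~1 and the ``chord not separating $x$ from $y$'' part of Case~2 are fine, but the argument does not close: the ``hard case'' you isolate is a genuine gap, not a routine verification. The companion statement you invoke --- an inner triangulation with \emph{three} forced boundary vertices (pairwise distinct colours on adjacent pairs), all other boundary lists of size $\ge 4$ and interior lists of size $\ge 5$ --- is neither proved nor obviously true, and the reductions you sketch for it fail as stated. In the chordless case, colouring and deleting a forced vertex $a$ and removing its colour from $N(a)$ leaves \emph{both} cycle-neighbours of $a$ with lists of size $3$; an $(x,y,z)$-correct assignment tolerates only one such vertex, so the residual instance does not satisfy the hypothesis of the lemma you want to recurse into. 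Worse, the two surviving forced vertices may be non-adjacent with \emph{equal} colours (nothing in your setup prevents $\phi_1(u)=c_y$ when $u\not\sim y$), which violates the hypothesis $L(x)\neq L(y)$ of the lemma; and in the chord case of your companion induction, the piece coloured second can again inherit three forced vertices whose pairwise colour relations you have not controlled. So the ``genuine work'' you defer is exactly the part that is missing.

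The paper's proof shows the hard case is avoidable altogether, and this is the idea you are missing: never split along a chord that is not incident to the forced vertex $x$. Either $F$ has a chord $xt$, in which case both pieces retain $x$ forced and acquire at most one new forced vertex $t$ \emph{adjacent} to $x$ (so each piece is again $(\cdot,\cdot,\cdot)$-correct and no third forced vertex ever appears); or $x$ has exactly two neighbours $u,v$ on $F$, in which case one deletes $x$ and removes its colour from $N(x)$. The only delicate point is when $z\in\{u,v\}$: then one pre-assigns $z$ a colour $\alpha\in L(z)\setminus(L(x)\cup L(y))$ --- this is precisely why $|L(z)|\ge 3$ is assumed --- so that the reduced instance on $G-x$ is $(y,z,v)$-correct, with the other neighbour $v$ of $x$ promoted to the role of the new ``$z$''. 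With that manoeuvre the induction closes with only Theorem~\ref{suitable} and the lemma itself, and no three-forced-vertex statement is ever needed. As written, your proof is incomplete precisely where you predicted the difficulty would lie.
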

\begin{proof}
We prove the result by induction on the number of vertices, the result holding trivially when $|V(G)|=3$.

Suppose first that $F$ has a chord $xt$. 
Then $xt$ lies in two unique cycles in $F\cup xt$, one $C_1$ containing 
$y$ and the other $C_2$.
For $i=1,2$, let $G_i$ denote the subgraph induced by the vertices
lying on $C_i$ or inside it. 
By the induction hypothesis, there exists an $L$-colouring $\phi_1$ of $G_1$.
Let $L_2$ be the list assignment on $G_2$ defined by
$L_2(t)=\{\phi_1(t)\}$ and
$L_2(u)=L(u)$ if $u\in V(G_2)\setminus \{t\}$. Let $z'=z$ if $z\in V(C_2)$ and $z'$ be any vertex of $V(C_2)\setminus \{x,t\}$ otherwise.
Then $L_2$ is $(x,t,z')$-correct for $G_2$ so
$G_2$ admits an $L_2$-colouring $\phi_2$ by induction hypothesis.
The union of $\phi_1$ and $\phi_2$ is an $L$-colouring of $G$.

Suppose now that $x$ has exactly two neighbours $u$ and $v$ on $F$.  Let $u,u_1,u_2
\dots, u_m, v$ be the neighbours of $x$ in their natural 
cyclic order around $x$. As $G$ is an  inner triangulation,
$uu_1u_2\cdots u_m,v=P$ is a path.
Hence the graph $G-x$ has $F'=P\cup (F-x)$ as outer face.

Assume first that $z\notin \{u,v\}$.
Then let $L'$ be the list assignment on $G-x$ defined by
$L'(w)=L(w)\setminus L(x)$ if $w\in N_G(x)$ and
$L'(w)=L(w)$ otherwise.
Clearly, $|L'(w)|\geq 3$ if $w\in F'$ and $|L'(w)|\geq 5$ otherwise. 
Hence, by Theorem~\ref{suitable}, $G-x$ admits an $L'$-colouring.
Colouring $x$ with the colour of its list, we obtain an $L$-colouring of $G$.

Assume now that $z\in \{u,v\}$, say $z=u$. 
Let $\alpha$ be a colour of $L(z)\setminus (L(x)\cup L(y))$.
Let $L'$ be the list assignment on $G-x$ defined by
$L'(z)=\{\alpha\}$,
$L'(w)=L(w)\setminus L(x)$ if $w\in N_G(x)\setminus \{z\}$ and
$L'(w)=L(w)$ otherwise.
Clearly, $L'$ is $(y,z,v)$-correct. 
Hence, by the induction hypothesis, $G-x$ admits an $L'$-colouring.
Colouring $x$ with the colour of its list, we obtain an $L$-colouring of $G$.
\end{proof}

\subsection{Nice, great and good paths}
%%%%%%%%%%%%%%%%%%%%%%%%%%%%%

Let $G$ be a graph and $H$ an induced subgraph of $G$.

We denote by $Z_H$ the set of vertices of $G$ which are adjacent to at least $3$ vertices of $H$. For every vertex
$v$ in $V(G)$, we denote by $N_H(v)$ the set of vertices of $H$ adjacent to $v$, and  we set $d_H(v)=|N_H(v)|$.

Let $L$ be a list assignment of $G$. For any $L$-colouring $\phi$ of $H$, we denote by $L_{\phi}$ the list assignment
of $G-H$ defined by $L_{\phi}(z)=L(z)\setminus \phi(N_H(z))$.
A vertex $z\in V(G-H)$ is {\it safe} (with respect to $\phi$), if $|L_{\phi}(z)|\geq 3$.
An $L$-colouring of $H$ is {\it safe} if  all vertices of $z\in V(G-H)$ are safe. 
Observe that if $L$ is a $5$-list assignment, then for any $L$-colouring $\phi$ of $H$, every vertex $z$ not in $Z_H$ has at most two neighbours in $H$ and therefore $|L_{\phi}(z)|\ge 3$. Hence $\phi$ is safe if and only if every vertex in $Z_H$ is safe.

Let $P = v_1 \cdots v_p$  be an induced path in $G$.  For $2\le i\le p-1 $, we denote by $[v_i]_P$, or simply $[v_i]$ if $P$ is clear from the context, the set $\{v_{i-1}, v_i, v_{i+1}\}$. We say that a vertex $z$ is adjacent to $[v_i]$ if it is adjacent to all vertices in the set $[v_i]$. Note that if $z$ is adjacent to $[v_i]$ then $z$ is not in $P$ as $P$ is induced.

\begin{lemma}\label{util}
Let  $P = v_1 \cdots v_p$  be an induced path in $G$, $x$ a vertex such that $N_P(x)=[v_{i+1}]$, $1\leq i\leq p-1$, and $\phi$ a colouring of $P-v_i$.
If $i=1$ or $\phi(v_{i-1}) =\phi(v_{i+1})$, then one can extend $\phi$ to $v_i$ such that $x$ is safe.
\end{lemma}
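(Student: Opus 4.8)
The plan is to color $v_i$ with a color that both extends $\phi$ properly along $P$ and leaves $x$ with at least $3$ available colors. Since $x$ is adjacent to exactly the three vertices $v_{i-1}, v_i, v_{i+1}$ in $P$ (when $i>1$) or to $v_1, v_2$ and one more vertex $v_2$... more precisely $N_P(x)=[v_{i+1}]=\{v_i,v_{i+1},v_{i+2}\}$ when $i+1$ plays the role of the middle index; here I read the hypothesis as $N_P(x)=\{v_{i-1},v_i,v_{i+1}\}$ in the convention $[v_{i+1}]$, so once $\phi$ is extended to $v_i$, the three colors seen by $x$ from $P$ are $\phi(v_{i-1})$, $\phi(v_i)$ and $\phi(v_{i+1})$. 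The key point is that in the two cases of the hypothesis, only \emph{two} distinct colors are forced on $x$ by $v_{i-1}$ and $v_{i+1}$: either $i=1$ so $v_{i-1}$ does not exist and only $\phi(v_{i+1})$ is forced, or $\phi(v_{i-1})=\phi(v_{i+1})$ so again only one color is forced by those two neighbors. So I would first record that $|\phi(N_P(x)\setminus\{v_i\})|\leq 1$.

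Next I would count available colors for $v_i$. The only constraints on $\phi(v_i)$ coming from $P$ are $\phi(v_i)\neq\phi(v_{i+1})$, and, if $i>1$, $\phi(v_i)\neq\phi(v_{i-1})$; by hypothesis these are at most one forbidden color in total. Since $|L(v_i)|\geq 5$ (or whatever the ambient list-size hypothesis is — here we are in the $5$-list setting), $v_i$ has at least $4$ choices that keep $\phi$ proper on $P$. Among these, I want one that does not decrease $|L_\phi(x)|$ below $3$; equivalently, I want $\phi(v_i)$ to be chosen so that $x$ loses at most $2$ colors overall. Since $v_{i-1}$ and $v_{i+1}$ together remove at most one color from $L(x)$, it suffices to pick $\phi(v_i)$ so that the total set $\{\phi(v_{i-1}),\phi(v_i),\phi(v_{i+1})\}$ has size at most $2$: either choose $\phi(v_i)$ equal to the single forced color $c:=\phi(v_{i+1})$ — but that violates properness — or, better, observe we need $\phi(v_i)\notin\{c\}$ yet want $x$ to keep $3$ colors. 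Re-examining: $x$ has $|L(x)|\geq 5$, and loses the colors in $\{c\}\cup\{\phi(v_i)\}$, a set of size at most $2$, hence $|L_\phi(x)|\geq 5-2=3$ automatically. So in fact \emph{any} proper extension works, and the real content is just the bookkeeping that the forbidden-for-$v_i$ color and the forced-on-$x$ color may coincide or not but in all cases $x$ ends with at least $3$ colors. I would therefore simply pick any $\phi(v_i)\in L(v_i)\setminus\{\phi(v_{i-1}),\phi(v_{i+1})\}$ (reading $\phi(v_0)$ as undefined when $i=1$), which is nonempty since $|L(v_i)|\geq 5\geq 3$, and check $|L_\phi(x)|=|L(x)\setminus\{\phi(v_{i-1}),\phi(v_i),\phi(v_{i+1})\}|\geq 5-2=3$.

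The only subtlety — and the one place to be careful — is the precise reading of the index convention $[v_{i+1}]$ and the range $1\le i\le p-1$, in particular making sure $v_{i-1}$ genuinely exists whenever $i>1$ and that $v_{i+1}$ exists (which it does since $i\le p-1$), so that the set $\{\phi(v_{i-1}),\phi(v_{i+1})\}$ referred to in the hypothesis is well defined and has at most one element in each of the two cases. Once that is pinned down, the argument is a one-line counting step: the two neighbors other than $v_i$ contribute at most one color to $\phi(N_P(x))$, $v_i$ contributes at most one more, so $x$ retains at least $5-2=3$ colors regardless of which proper color we give $v_i$. I do not expect any genuine obstacle here; the lemma is a convenient packaging of this counting for repeated use in the sequel.
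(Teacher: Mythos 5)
There is a genuine gap, and it sits exactly at the point you flag as the one place to be careful: the reading of the index convention. The paper defines $[v_j]=\{v_{j-1},v_j,v_{j+1}\}$, so the hypothesis $N_P(x)=[v_{i+1}]$ means $N_P(x)=\{v_i,v_{i+1},v_{i+2}\}$: the vertex $v_i$ being coloured is an \emph{end} of the triple, not its centre. You first write this correct reading and then switch to $N_P(x)=\{v_{i-1},v_i,v_{i+1}\}$, and your entire argument is carried out under that second, incorrect reading. (That the first reading is the intended one is confirmed by every application in the paper; e.g.\ in Case~1.1 of the proof of Lemma~\ref{goodpath} the lemma is invoked on the path $v_2\cdots v_p$ to choose $\phi(v_2)$ for a vertex $z$ with $N(z)\cap\{v_2,\dots,v_p\}=\{v_2,v_3,v_4\}$.) The hypothesis ``$i=1$ or $\phi(v_{i-1})=\phi(v_{i+1})$'' is not saying that $x$'s other neighbours contribute only one colour; it is saying that $v_i$ has only one forbidden colour on the path, so that $\phi(v_i)$ may be chosen essentially freely in $L(v_i)\setminus\{\phi(v_{i+1})\}$.

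Under the correct reading your conclusion that \emph{any} proper extension works is false. The two already-coloured neighbours $v_{i+1}$ and $v_{i+2}$ of $x$ are adjacent on $P$, so $\phi(v_{i+1})\neq\phi(v_{i+2})$ and $x$ may already lose two colours to them; if $v_i$ then receives a third colour of $L(x)$, the vertex $x$ retains only $|L(x)|-3$ colours, which is $2$ when $|L(x)|=5$ --- not safe. The real content of the lemma is the careful choice of $\phi(v_i)$ that avoids this: if $\{\phi(v_{i+1}),\phi(v_{i+2})\}\not\subset L(x)$, any proper colour for $v_i$ costs $x$ at most two colours in total; otherwise one tries $\phi(v_i)=\phi(v_{i+2})$ (legal since $P$ is induced and $\phi(v_{i+2})\neq\phi(v_{i+1})=\phi(v_{i-1})$), and if $\phi(v_{i+2})\notin L(v_i)$ one uses that $\phi(v_{i+2})\in L(x)\setminus L(v_i)$ forces $L(v_i)\setminus L(x)\neq\emptyset$ and colours $v_i$ with a colour outside $L(x)$. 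None of this case analysis appears in your write-up, so the proposal does not establish the lemma as stated.
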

\begin{proof}
If $\{\phi(v_{i+1}), \phi(v_{i+2})\} \not\subset L(x)$, then assigning to $v_i$ any colour distinct from
$\phi(v_{i+1})$, we get a colouring of $P$ such that $x$ is safe.
So we may assume that $\{\phi(v_{i+1}), \phi(v_{i+2})\} \subset L(x)$.

If $\phi(v_{i+2})\in L(v_i)$, then setting $\phi(v_i)=\phi(v_{i+2})$, we have a colouring $\phi$ such that $x$ is safe.
If not, there is a colour $\alpha$ in $L(v_i)\setminus L(x)$. Necessarily, $\alpha\neq \phi(v_{i+1})$ and so one can colour $v_i$ with $\alpha$.
Doing so, we obtain a colouring such that $x$ is safe.
\end{proof}

Let $P = v_1 \cdots v_p$ be an induced path.
It is a {\it nice path} in $G$ if the following are true.
\begin{enumerate}
\item[(a)] for every $z\in Z_P$, $N_P(z) = [v_i]$ for some $2\le i\le p-1$;
\item[(b)] for every $2\le i\le p-1$, there are at most two vertices adjacent to $[v_i]$ and, if there are two such vertices, then the number of vertices adjacent to  $[v_{i-1}]$ or $[v_{i+1}]$ is at most $1$.
\end{enumerate}

It is a {\it great path} in $G$ if is is nice and satisfies the following extra property.
\begin{enumerate}
\item[(c)] for any $i<j$, if there are two vertices adjacent to $[v_i]$ and two vertices adjacent to $[v_j]$,  then the number of vertices adjacent to $[v_{i+1}]$ or  $[v_{j-1}]$ is at most $1$.
\end{enumerate}

A safe colouring of a path $P= v_1 \cdots v_p$ is {\it $\alpha$-safe} if $\phi(v_1) = \alpha$.

\begin{lemma}\label{nicepath}
If $P$ is a great path and $L$ is a $5$-list assignment of $G$, then for any $\alpha\in L(v_1)$, there exists an $\alpha$-safe $L$-colouring $\phi$ of $P$.
\end{lemma}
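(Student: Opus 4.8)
The plan is to argue by induction on $p$. If $Z_P=\emptyset$ -- in particular if $p\le 2$ -- there is nothing delicate: put $\phi(v_1)=\alpha$ and, for $i=2,\dots,p$, let $\phi(v_i)$ be any colour of $L(v_i)$ distinct from $\phi(v_{i-1})$. Since $L$ is a $5$-list assignment, every vertex of $G-P$ outside $Z_P$ is automatically safe (as noted before the lemma), and here $Z_P=\emptyset$, so this colouring is $\alpha$-safe.

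Now suppose $Z_P\neq\emptyset$. By (a), every $z\in Z_P$ satisfies $N_P(z)=[v_\ell]$ for some interior index $\ell$; let $j'$ and $j$ be the smallest and the largest such index. The first idea is to shorten $P$ from one of its ends. If $j'\ge 3$, colour $v_1=\alpha,v_2,\dots,v_{j'-2}$ greedily (each avoiding its predecessor); the terminal subpath $P^{\ast}=v_{j'-1}\cdots v_p$ is again a great path, since conditions (a)--(c) restrict to it (the minimality of $j'$ disposes of the one index of $P^{\ast}$ that has lost a neighbouring triple). Pick $\beta\in L(v_{j'-1})\setminus\{\phi(v_{j'-2})\}$, apply the induction hypothesis to $P^{\ast}$ to get a $\beta$-safe colouring, and take the union: it is $\alpha$-safe for $P$ because every $z\in Z_P$ has all of its $P$-neighbours inside $P^{\ast}$. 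Symmetrically, if $j\le p-2$, then $P''=v_1\cdots v_{j+1}$ is a shorter great path with $v_j$ still interior, so by induction it has an $\alpha$-safe colouring; extend it greedily to $v_{j+2},\dots,v_p$, which keeps every $z\in Z_P$ safe since it alters no $L_\phi(z)$.

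It remains to treat $j'=2$ and $j=p-1$, i.e. dangerous triples reaching both ends of $P$; this I would handle by colouring a short prefix of $P$ directly and then invoking induction on what is left. The two basic manoeuvres for a dangerous triple $[v_\ell]$ are: (i) give its two outer vertices $v_{\ell-1},v_{\ell+1}$ the same colour, so that $|\phi([v_\ell])|\le 2$ and \emph{every} vertex adjacent to $[v_\ell]$ becomes safe at once -- available whenever $\phi(v_{\ell-1})\in L(v_{\ell+1})$ (for $\ell=2$ this is just $\phi(v_3)=\alpha$ when $\alpha\in L(v_3)$); and (ii) when (i) fails, postpone the colouring of one end vertex of the triple and apply Lemma~\ref{util} to a vertex adjacent to it, whose hypothesis -- colour agreement just outside the triple, or being at an end of $P$ -- can then be arranged. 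After this prefix is coloured, the rest of $P$ is a shorter great path whose new first vertex carries a fixed colour, and the induction hypothesis (with that colour as ``$\alpha$'') completes the argument.

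The step I expect to be the main obstacle is exactly this last case, and it is why conditions (b) and (c) are built into the definition of a great path: a plain greedy colouring can get stuck precisely when the list of the last coloured vertex of a dangerous triple is contained in the list of a vertex $z$ it must protect, forcing manoeuvre (ii); and the hypothesis of Lemma~\ref{util} is guaranteed to be reachable only because (b) forbids an adjacent triple from also having two adjacent vertices and (c) forbids two such ``heavy'' triples from being close with a heavy triple between them. Thus the technical heart is a case analysis organised by the numbers of vertices adjacent to consecutive triples around an active one, checking in each pattern both that a suitable prefix colouring exists and that it leaves a great path for the induction.
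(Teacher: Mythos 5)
Your induction set-up and the two end-reductions are sound and match the paper's strategy: the paper also inducts on $p$ from the $v_1$ end, and your case ``no vertex is adjacent to $[v_2]$'' (equivalently $j'\ge 3$, after one greedy step) is exactly its Case~1. The reductions are correctly justified -- every $z\in Z_P$ has all three neighbours inside the surviving subpath, so safety transfers.

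However, there is a genuine gap: the entire substance of the lemma lies in the case you explicitly defer, namely when some vertex is adjacent to $[v_2]$, and your ``manoeuvres (i) and (ii)'' are a description of tools, not a verification that they always succeed. Concretely, the hard situation is when the vertex $z$ adjacent to $[v_2]$ satisfies $\alpha\in L(z)$ and $L(z)=L(v_2)$ (otherwise one colours $v_2$ outside $L(z)$ and inducts). There your manoeuvre (i) ($\phi(v_3)=\alpha$) is unavailable when $\alpha\notin L(v_3)$, and the paper must instead pick $\gamma\in L(v_3)\setminus L(v_2)$; one then still has to protect the vertices adjacent to $[v_3]$, which in the worst subcase forces the identity $L(t)=L(v_2)\cup\{\gamma\}\setminus\{\alpha\}$ and the choice $\phi(v_2)=\phi(v_4)$, and in the subcase of two vertices $t_1,t_2$ adjacent to $[v_3]$ requires choosing $\beta\in L(v_2)\setminus\{\alpha\}$ and $\delta\in L(v_4)\setminus\{\gamma\}$ from two \emph{disjoint} $4$-sets so that $|\{\beta,\gamma,\delta\}\cap L(t_i)|\le 2$ for both $i$ -- a counting step your sketch does not supply. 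Likewise, the case of two vertices adjacent to $[v_2]$ needs the observation that one may assume $L(z_1)=L(z_2)=L(v_2)$ so that one of them can be deleted, followed by the iterated application of Lemma~\ref{util} along $v_3,\dots,v_{j-1}$ that property (c) makes possible. Until this case analysis is actually carried out (it is the whole of the paper's Cases 2.2, 2.3 and 3), the argument is a plan rather than a proof.
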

\begin{proof}

 We prove this result by induction on $p$, the number of vertices of $P$, the result holding trivially when $p\le 2$.
 
Assume now that $p\ge 3$. 
Since $P$ is great then every vertex of $Z_P$ adjacent to $v_1$ is also adjacent to $v_2$ and there are at most
two vertices of $Z_P$ adjacent to $[v_2]$.

Set $\phi(v_1)=\alpha$.

\begin{itemize}
\item[1.]
If there is no vertex adjacent to $[v_2]$, then by induction, for any  $\beta\in L(v_2)\setminus \{\alpha\}$, there is a $\beta$-safe $L$-colouring $\phi$ of $v_2 \cdots v_p$. Since $\phi(v_1)=\alpha$, $\phi$ is an $\alpha$-safe $L$-colouring of $P$.

\item[2.]
Assume now that there is a unique vertex $z$ adjacent to $[v_2]$.

If $\alpha\notin L(z)$, then by Case 1, there is an $\alpha$-safe $L$-colouring $\phi$ of $P$ in $G-z$.
It is also an  $\alpha$-safe $L$-colouring of $P$ in $G$ since $z$ is safe as $\alpha\notin L(z)$.
Hence we may assume that $\alpha\in L(z)$.

Assume there is a colour $\beta$ in  $L(v_2)\setminus\{\alpha\}$. By induction  there is a $\beta$-safe $L$-colouring $\phi$ of $v_2 \cdots v_p$. Since $\phi(v_1)=\alpha$, we obtain an $\alpha$-safe $L$-colouring of $P$ because  $z$ is safe as $\beta\notin L(z)$.
Hence we may assume that $L(v_2) = L(z)$. In particular, $\alpha\in L(v_2)$.
Let $\gamma$ be $\alpha$ if $\alpha \in L(v_3)$, and a colour in $L(v_3) \setminus L(v_2)$ otherwise.
We set $\phi(v_3)=\gamma$.
Observe that whatever colour is assigned to $v_2$, the vertex $z$ will be safe.
\begin{itemize}

\item[2.1.] Assume that no vertex is adjacent to $[v_3]$.
By induction hypothesis, there is a  $\gamma$-safe $L$-colouring $\phi$ of $v_3 \cdots v_p$.
Choosing $\phi(v_2)$ in $L(v_2)\setminus \{\alpha, \gamma\}$, we obtain 
an $\alpha$-safe $L$-colouring of $P$. 

\item[2.2.] Assume that exactly one vertex $t$ is adjacent to $[v_3]$.
By induction hypothesis, there is a  $\gamma$-safe $L$-colouring $\phi$ of $v_3 \cdots v_p$.
So far all the vertices except $t$ will be safe. So we just need to choose $\phi(v_2)$ so that $t$ is safe.

Observe that if $\{\gamma, \phi(v_4)\} \not \subset L(t)$, choosing any colour of $L(v_2)\setminus \{\alpha, \gamma\}$ will do the job.
So we may assume that $\{\gamma, \phi(v_4)\} \subset L(t)$.
If there is a colour $\beta\in L(v_2)\setminus (L(t) \cup \{\alpha\})$, then setting $L(v_2)=\beta$ will make $t$ safe.
So we may assume that $L(v_2)\setminus \{\alpha\}\subset L(t)$ and so $L(t)=L(v_2)\cup \{\gamma\}\setminus \{\alpha\}$.
Thus $\phi(v_4) \in L(v_2)\setminus \{\alpha, \gamma\}$.
Then setting $\phi(v_2)=\phi(v_4)$ makes $t$ safe.

\item[2.3.]
Assume that two vertices $t_1$ and $t_2$ are adjacent to $[v_3]$. Then no vertex is adjacent to $[v_4]$.
Therefore, it suffices to prove that there is an $\alpha$-safe $L$-colouring of $v_1v_2v_3v_4$.
Indeed, if we have such a colouring $\phi$, then by induction, $v_4\cdots v_p$ admits a $\phi(v_4)$-safe $L$-colouring $\phi'$.
The union of these two colourings is an $\alpha$-safe $L$-colouring of $P$.

If there exists $\beta\in L(v_4) \cap L(v_2)\setminus \{\alpha, \gamma\}$, then setting $\phi(v_2)=\phi(v_4)=\beta$, we obtain
an $\alpha$-safe $L$-colouring of $v_1v_2v_3v_4$.
Otherwise, $L(v_4)\setminus \{\gamma\}$ and $L(v_2)\setminus \{\alpha\}$ are disjoint.
Hence one can choose $\beta$ in $L(v_2)\setminus \{\alpha\}$  and  $\delta$ in $L(v_4)\setminus \{\gamma\}$ so that
$|\{\beta , \gamma, \delta\}\cap L(t_i)|\leq 2$ for $i=1,2$.
Setting $\phi(v_2)=\beta$ and $\phi(v_4)=\delta$, we obtain
an $\alpha$-safe $L$-colouring of $v_1v_2v_3v_4$.

\end{itemize}

\item[3.] Assume that two vertices $z_1$ and $z_2$ are adjacent to $[v_2]$. 

We claim that it suffices to prove that there is an $\alpha$-safe $L$-colouring of $v_1v_2v_3$.
 
 Let $j$ be the smallest index such that no vertex is adjacent to $[v_j]$. For the definition of $j$, consider there is no vertex adjacent to $[v_p]$ so that $j\le p$.
By the property (c) of great path, for all $3\leq i < j$, there is exactly one  vertex $z_i$ adjacent to $[v_i]$.
For $i=3, \ldots, j-1$, one after another, one can use Lemma~\ref{util} in the path $v_{i+1} \cdots v_1$ to extend $\phi$ to $v_{i+1}$, so that
$z_i$ is safe. Then applying induction on the path  $v_{j}\cdots v_p$, we obtain an $\alpha$-safe $L$-colouring.
This proves the claim.

Let us now prove that an  $\alpha$-safe $L$-colouring of $v_1v_2v_3$ exists.

If $\alpha\notin L(z_i)$, then any $\alpha$-safe $L$-colouring of $v_1v_2v_3$ in $G-z_i$ will be an $\alpha$-safe $L$-colouring in  $G$.  By Case 2, one can find such a colouring in $G-z_i$, so we may assume that
$\alpha\in L(z_i)$.

If there is a colour $\beta\in L(v_2)\setminus L(z_1)$, then set $\phi(v_2)=\beta$.  By Lemma~\ref{util} in the path $v_3 v_2 v_1$, one can choose $\phi(v_3)$ in $L(v_3)$ to obtain an $\alpha$-safe $L$-colouring of $v_1v_2v_3$.
Hence we may assume that $L(z_1)=L(v_2)$. Similarly, we may assume that $L(z_2)=L(v_2)$. Therefore, any $\alpha$-safe $L$-colouring of $v_1v_2v_3$ in $G-z_2$ will be an $\alpha$-safe $L$-colouring in  $G$. 
We can find such a colouring using Case 2.
\end{itemize}
\end{proof}

We say that an induced path $P = v_1 \cdots v_p$ is {\it good} path if either $P$ is great or $p\ge 4$ and there is a vertex $z\in Z_P$ adjacent to $v_1$ such that $\{v_1,v_4\} \subset N_P(z) \subseteq \{v_1, v_2, v_3, v_4\}$ satisfying the following conditions:
\begin{itemize}
%\item $P$ is a great path in $G-z$;
\item $P$ is a great path in $G\setminus v_1z$. 
\item if two vertices distinct from $z$ are adjacent to $[v_2]$, then $N_P(z)=\{v_1, v_3, v_4\}$ and there is no vertex adjacent to $[v_3]$; and
\item if two vertices distinct from $z$ are adjacent to $[v_3]$, then $N_P(z)=\{v_1, v_2, v_4\}$ and there is no vertex adjacent to $[v_2]$.
%\item if $N_P(z)=\{v_1, v_2, v_4\}$ or $N_P(z)=\{v_1,v_2,v_3\}$, then there is at most one vertex adjacent to $[v_2]$. 
\end{itemize}
Note that since $P$ is induced, then $z$ is not in $P$.

\begin{lemma}\label{goodpath}
If $P = v_1\cdots v_p$ is a good path and $L$ is a $5$-list assignment of $G$, then there exists a safe $L$-colouring of $P$.
\end{lemma}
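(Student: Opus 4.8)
The plan is to reduce the "good but not great" case to the "great" case handled by Lemma~\ref{nicepath}. So assume $P = v_1 \cdots v_p$ is good but not great, witnessed by a vertex $z \in Z_P$ with $\{v_1, v_4\} \subseteq N_P(z) \subseteq \{v_1, v_2, v_3, v_4\}$, such that $P$ is a great path in $G \setminus v_1z$. The idea is to first colour $v_1$, then exploit the fact that $P$ is great once we delete the edge $v_1z$: by Lemma~\ref{nicepath} applied in $G \setminus v_1z$, for any colour $\alpha \in L(v_1)$ there is an $\alpha$-safe $L$-colouring $\phi$ of $P$ in $G \setminus v_1z$, i.e.\ one in which every vertex of $V(G) \setminus V(P)$ other than possibly $z$ is safe, and $z$ has at least three colours left once we ignore the contribution of $v_1$. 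The only thing that can go wrong is that colouring $v_1$ with $\phi(v_1) = \alpha$ costs $z$ its third colour, i.e.\ $\alpha \in L(z)$ and $z$ has exactly three colours outside $\phi(N_P(z) \setminus \{v_1\})$ before accounting for $v_1$.

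\textbf{Main case analysis.} First I would dispose of the easy situation: if $\alpha \notin L(z)$ for some $\alpha \in L(v_1)$, pick that $\alpha$, and then any $\alpha$-safe $L$-colouring of $P$ in $G \setminus v_1z$ is automatically safe in $G$, since $z$ is unaffected by $v_1$'s colour. So assume $L(v_1) \subseteq L(z)$; in particular $|L(z) \cap L(v_1)| = 5$. Now the two remaining vertices of $N_P(z) \setminus \{v_1\}$ lie in $\{v_2, v_3, v_4\}$, and since $v_4 \in N_P(z)$ the set $N_P(z) \setminus \{v_1\}$ is one of $\{v_2, v_3, v_4\}$, $\{v_3, v_4\}$, or $\{v_2, v_4\}$. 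In all cases $|N_P(z) \setminus \{v_1\}| \le 3$, so $\phi$ assigns at most $3$ colours to those vertices; thus $|L(z) \setminus \phi(N_P(z) \setminus \{v_1\})| \ge 5 - 3 = 2$, and $z$ is safe as long as colouring $v_1$ doesn't remove the last "spare" colour — i.e.\ we need $\phi(v_1) \notin L(z) \setminus \phi(N_P(z)\setminus\{v_1\})$ or that set to have size $\ge 4 - |\{v_1 \text{ contributes}\}|$... more carefully: $z$ is safe in $G$ iff $|L(z) \setminus \phi(N_P(z))| \ge 3$. So I want to choose, among the colourings produced by Lemma~\ref{nicepath}, one where $\phi$ gives at least $4$ distinct colours to $N_P(z)$ counting $v_1$, OR where the colours repeat favourably. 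The cleanest route: the structural restrictions in the definition of good path — if two vertices other than $z$ are adjacent to $[v_2]$ then $N_P(z) = \{v_1,v_3,v_4\}$ and nothing is adjacent to $[v_3]$, and symmetrically for $[v_3]$ — are exactly what's needed so that inside the proof of Lemma~\ref{nicepath} applied to $G \setminus v_1z$, the relevant $v_i$'s have enough freedom to make $z$ safe too.

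\textbf{How I would actually close it.} Rather than re-run the whole induction, I would treat $z$ as one more "constraint vertex" near the start of $P$ and mimic Cases 2 and 3 of the proof of Lemma~\ref{nicepath}, but now with the extra neighbour $v_1$ of $z$ already coloured $\alpha$. Concretely: colour $v_1$ with $\alpha$; then among $v_2, v_3, v_4$ the vertices in $N_P(z)$ must collectively receive colours so that, together with $\alpha$, at most two colours of $L(z)$ are hit — equivalently we want two of the coloured neighbours in $N_P(z)$ to repeat a colour, or one of them to repeat $\alpha$. Using Lemma~\ref{util} on the reversed sub-path $v_{k} v_{k-1} \cdots v_1$ to set the colour of an interior $v_i$ while keeping one extra vertex $t$ adjacent to $[v_i]$ safe, and using the good-path hypothesis that at most one of $[v_2],[v_3]$ has two outside-$z$ neighbours (so there is room to force a repetition), I can choose $\phi(v_2), \phi(v_3), \phi(v_4)$ on the short initial segment so that $z$ is safe, and then extend along $v_4 \cdots v_p$ (or $v_j \cdots v_p$) by induction/Lemma~\ref{nicepath} exactly as in Case~3 of Lemma~\ref{nicepath}. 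I would split according to which of $v_2, v_3$ belongs to $N_P(z)$ and how many outside vertices are adjacent to $[v_2]$ and $[v_3]$, handling the worst case $N_P(z) = \{v_1, v_2, v_3, v_4\}$ first and then the two lighter cases.

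\textbf{The hard part} will be bookkeeping in the subcase where $N_P(z) = \{v_1, v_2, v_3, v_4\}$ and there are additional vertices adjacent to $[v_2]$ or $[v_3]$: here $z$ "sees" three of the already-coloured vertices and I have only two units of slack ($5 - 3 = 2$) in $L(z)$, so I genuinely must force a coincidence among $\{\alpha, \phi(v_2), \phi(v_3), \phi(v_4)\}$ while simultaneously keeping the at-most-two other vertices near the start safe. The good-path conditions are calibrated precisely to make this possible — they guarantee that when $z$ is "heavy" (sees all of $v_1, \ldots, v_4$) the segment $[v_2], [v_3]$ is "light" (few other adjacent vertices) — so the argument should go through by a careful but elementary colour count; I expect no conceptual obstacle beyond getting every subcase right.
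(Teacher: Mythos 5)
Your overall strategy is exactly the paper's: dispose of the great case via Lemma~\ref{nicepath}; otherwise isolate the exceptional vertex $z$, note that if some $\alpha\in L(v_1)\setminus L(z)$ exists then an $\alpha$-safe colouring of $P$ in $G\setminus v_1z$ is already safe in $G$, conclude $L(v_1)=L(z)$, and then split on $N_P(z)\in\bigl\{\{v_1,v_2,v_3,v_4\},\{v_1,v_3,v_4\},\{v_1,v_2,v_4\}\bigr\}$, forcing a colour coincidence on $N_P(z)$ (typically $\phi(v_1)=\phi(v_3)$ or $\phi(v_2)=\phi(v_4)$) while using Lemma~\ref{util} to keep the few other vertices near the start safe. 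So the approach is the right one and matches the paper.

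The gap is that the decisive content of the lemma --- the case analysis you describe as ``bookkeeping'' and ``a careful but elementary colour count'' --- is asserted rather than carried out, and it is not as routine as you claim. Two concrete points. First, your plan confines the difficulty to choosing $\phi(v_2),\phi(v_3),\phi(v_4)$ and then extending along $v_4\cdots v_p$; but in the case $N_P(z)=\{v_1,v_2,v_4\}$ the forced coincidence $\phi(v_2)=\phi(v_4)=\alpha$ pins down $v_4$'s colour, and the vertices adjacent to $[v_4]$ (there may be two of them) and to $[v_5]$ are then \emph{not} taken care of by the extension of $v_4\cdots v_p$, since $v_3$ lies outside that subpath. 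The paper has to run a separate argument on the segment $v_1\cdots v_5$, distinguishing whether a vertex is adjacent to $[v_3]$ and exploiting that property (c) then forbids vertices adjacent to $[v_5]$; your sketch does not anticipate that the constraint propagates past $v_4$. Second, even in the ``heavy'' case $N_P(z)=\{v_1,v_2,v_3,v_4\}$ with $z$ the unique vertex on $[v_3]$, after the reductions $L(v_1)=L(v_2)=L(z)$ and $L(z)\cap L(v_3)=\emptyset$ no coincidence involving $v_3$ or $v_1$ can help $z$, and one must instead use the count $|L(v_2)\cup L(v_3)|=10\ge|L(z')|$ to pick $\phi(v_2)$ and $\phi(v_3)$ hitting $L(z')$ at most once, then recycle $\phi(v_4)$ onto $v_1$ when possible. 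These are genuine, non-obvious steps; without them (or equivalents) the proof is not complete, even though the skeleton you propose is the correct one.
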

\begin{proof}

If $P$ is great, then the result follows from Lemma~\ref{nicepath}. 
So we may assume that $P$ is not  great.
Let $z$ be the vertex of $Z_P$ such that $\{v_1,v_4\} \subset N_P(z) \subseteq \{v_1, v_2, v_3, v_4\}$.

If there is a colour $\alpha\in L(v_1)\setminus L(z)$, then let $\phi(v_1) = \alpha$ and use Lemma~\ref{nicepath} to colour $v_1 \cdots v_p$ in $G\setminus v_1z$. The obtained colouring $\phi$ is a safe $L$-colouring of $P$. For any $z'\in Z_{P}\setminus \{z\}$, we have  $|L_{\phi}(z')|\geq 3$ because 
$z'$ has the same neighbourhood in $G$ and $G\setminus v_1z$. 
Now $|L_{\phi}(z)|\geq 3$ since $\alpha \notin L(z)$, so $\phi$ is safe.
Henceforth, we assume that $L(v_1) = L(z)$.

\begin{itemize}
\item[1.] Assume first that $N_P(z) = \{v_1, v_2, v_3, v_4\}$. 

By the properties of a good path, at most one vertex $z'$ different from $z$ is adjacent to $[v_2]$.

\begin{itemize}
\item[1.1.]  Assume first that $z$ is the unique vertex adjacent to $[v_3]$.
%\item[1.1.1.]  Assume first that $z$ is the unique vertex adjacent to $[v_3]$.

If there is a colour $\alpha\in L(z)\cap L(v_3)$, then set $\phi(v_1)=\phi(v_3)=\alpha$.
By Lemma~\ref{nicepath}, one can extend $\phi$ to $v_3 \cdots v_p$ so that all vertices of $Z_P$ but $z$ are safe.
Then by Lemma~\ref{util} applied to $v_2\cdots v_p$, one can choose $\phi(v_2)\in L(v_2)$ so that $z$ is safe for $P-v_1$.
Since $\phi(v_1)=\phi(v_3)$, then $\phi$ is a proper colouring and $z$ is safe for $P$.
Hence $\phi$ is  a safe $L$-colouring of $P$.
So we may assume that $L(z)\cap L(v_3)=\emptyset$.

If there exists $\beta\in L(v_2)\setminus L(z)$, then set $\phi(v_2)=\beta$.
By Lemma~\ref{nicepath}, one can extend $\phi$ to $v_2 \cdots v_p$ so that all vertices of $Z_P$ but $z$ and $z'$ are safe.
Observe that necessarily $z$ will be safe because $\phi(v_2)\notin L(z)$ and  $\phi(v_3)\notin L(z)$.
By Lemma~\ref{util}, one can extend $\phi$ to $v_1$ so that $z'$ is safe, thus getting a safe $L$-colouring of $P$.
So we may assume that $L(v_2)=L(z)$.

We have $|L(v_2)\cup L(v_3)|=10 \geq |L(z')|$. So we can find $\alpha\in L(v_2)$ and $\beta\in L(v_3)$ so that
$|\{\alpha, \beta\} \cap L(z')|\leq 1$. 
Using Lemma~\ref{nicepath} take a $\beta$-safe $L$-colouring $\phi$ of the path $v_3v_4 \dots v_p$ and set
$\phi(v_2)=\alpha$.
If $\phi(v_4)\in L(z)\setminus \{\alpha\}$, then colour $v_1$ with $\phi(v_4)$, otherwise colour it with any colour distinct from $\alpha$.
This gives a safe $L$-colouring of $P$.

\item[1.2] Assume now that a vertex $y\neq z$ is adjacent to $[v_3]$.

\begin{itemize}
\item Suppose that a vertex $t$ is adjacent to $[v_4]$. Then $z'$ does not exist.

If there is a colour $\alpha \in L(v_2)\setminus L(z)$, then using Lemma~\ref{nicepath} take an $\alpha$-safe $L$-colouring $\phi$ of  $v_2 \cdots v_p$.  If $\phi(v_3)\notin L(z)$, then $z$ would be safe whatever colour we assign to $v_1$, so
there is a safe $L$-colouring of $P$.
If  If $\phi(v_3)\in L(z)$, then setting $\phi(v_1)=\phi(v_3)$, we obtain a safe $L$-colouring of $P$.
So we may assume that $L(v_2)=L(z)$.

If there is a colour $\alpha$ in $L(z)\cap L(v_4)$, then set $\phi(v_2)=\phi(v_4)=\alpha$.
Then $y$ will be safe. Extend $\phi$ to $v_4\cdots v_p$ by Lemma~\ref{nicepath}.
Then all the vertices are safe except $t$ and $z$.
By Lemma~\ref{util}, one can choose $\phi(v_3)$ so that $t$ is safe.
If $\phi(v_3)\in L(z)$, then setting $\phi(v_1)=\phi(v_3)$, we get a safe $L$-colouring of $P$.
If  $\phi(v_3)\notin L(z)$, then whatever colour we assign to $v_1$, we obtain a safe colouring of $P$.
Hence we may assume that $L(z)\cap L(v_4)=\emptyset$. 
By Lemma~\ref{nicepath}, there is a safe $L$-colouring of $P$ in $G\setminus zv_4$. This colouring is also a safe colouring
of $P$ in $G$, since $\phi(v_4)$ is not in $L(z)$.

\item If no vertex is adjacent to $[v_4]$, then $z'$ may exist.
In this case, it is sufficient to prove that there exists a safe $L$-colouring of $v_1v_2v_3v_4$.
Indeed, if there is such a colouring $\phi$, then by Lemma~\ref{nicepath}, it can be extended to a safe $L$-colouring of $P$.

Symmetrically to the way we proved the result when $L(v_1)\neq L(z)$, one can prove it when  $L(v_4)\neq L(z)$.
Hence we may assume that $L(v_4)=L(z)$.

Assume that there is a colour $\alpha \in L(v_2)\cap L(z)$.  Set $\phi(v_2)=\phi(v_4)=\alpha$.
If there is a colour $\beta \in L(v_3)\setminus L(z)$, then set $\phi(v_3)=\beta$ so that $z$ will be safe
and extend $\phi$ with Lemma~\ref{util} so that $z'$ is safe to obtain a safe colouring
of $v_1v_2v_3v_4$ in $G$.
If  $L(v_3) = L(z)$, then assign to $v_1$ and $v_3$ a same colour in $L(z)\setminus \{\alpha\}$ to get a safe colouring
of $v_1v_2v_3v_4$.

Hence we may assume that $L(v_2)\cap L(z)=\emptyset$. Symmetrically, we may assume that $L(v_3)\cap L(z)=\emptyset$.
By Lemma~\ref{nicepath}, there exists a safe colouring $\phi$ of $v_1v_2v_3v_4$ in $G-z$.
It is also a safe colouring of $v_1v_2v_3v_4$ in $G$ because $\phi(v_2)$ and $\phi(v_3)$ cannot be in $L(z)$.

\end{itemize}

\end{itemize}

\item[2.] Assume now that $N_P(z)= \{v_1,v_3,v_4\}$.

If no vertex is adjacent to $[v_2]$, then using Lemma~\ref{nicepath} take a safe $L$-colouring of $v_2 \dots v_p$.
If $\phi(v_3)\in L(z)$, then set $\phi(v_1)=\phi(v_3)$. If not colour $v_3$ with any colour in $L(z)\setminus \{\phi(v_2)\}$. This gives a safe $L$-colouring of $P$.
Hence we may assume that a vertex $t$ is adjacent to $[v_2]$.

By the properties of a good path, we know that at most one vertex, say $u$, is adjacent to $v_3$.
If $L(v_3)\cap L(z)$ is empty, then any safe $L$-colouring of $P$ given by Lemma~\ref{nicepath} in $G\setminus zv_1$ would be a safe $L$-colouring of $P$.
Hence we may assume that there is a colour $\alpha$ in $L(v_3)\cap L(z)$.
Set $\phi(v_1)=\phi(v_3)=\alpha$ and apply Lemma~\ref{nicepath} to $v_3 \dots v_p$.
Then by Lemma~\ref{util}, we can choose $\phi(v_2)$ so that the possible vertex $u$ is safe.
This gives a safe colouring of $P$.

\item[3.] Assume that $N_P(z)= \{v_1,v_2,v_4\}$.

Suppose no vertex is adjacent to $[v_2]$. 
By Lemma~\ref{nicepath}, there is a safe$L$- colouring of $v_2 \dots v_p$. 
Set $\phi(v_1)=\phi(v_4)$ if $\phi(v_4)\in L(z) \setminus \{\phi(v_2)\}$, and let $\phi(v_1)$ be any colour of $L(v_1)\setminus \{\phi(v_2)\}$ otherwise.
Doing so $z$ is safe and so $\phi$ is a safe $L$-colouring of $P$.
Hence we may assume that a vertex $u$ is adjacent to $[v_2]$.
By definition of good path, it is the unique vertex adjacent to $[v_2]$.

Suppose that there exists a colour $\beta$ in $L(v_2)\setminus L(z)$.
By Lemma~\ref{nicepath}, there is a safe colouring $\phi$ of $v_2 \dots v_p$ such that $\phi(v_2)=\beta$.
By Lemma~6, it can be extended to $v_1$ so that $u$ is safe. This yields a safe $L$-colouring of $P$.
Hence we may assume that $L(v_2)=L(z)$.

If $L(v_4)\cap L(z)=\emptyset$, then 
in every colouring of $P$, the vertex $z$ will be safe.
Hence any safe colouring of $P$ in $G-z$, (there is one by Lemma~\ref{nicepath}) is a safe $L$-colouring of $P$ in $G$.
So we may assume that there exists a colour $\alpha\in L(v_4)\cap L(z)$.

Assume that at most one vertex $s$ is adjacent to $[v_4]$. Set $\phi(v_2)=\phi(v_4)=\alpha$ so that $z$ and all the vertices adjacent to $[v_3]$ will be safe.
By Lemma~\ref{nicepath}, there is an $\alpha$-safe colouring of $v_4\dots v_p$. 
Now by Lemma~\ref{util}, one can extend $\phi$ to $v_3$ so that $s$ (if it exists) is safe, and then again by Lemma~\ref{util} extend it to $v_1$ so that $u$ is safe.
This gives a safe $L$-colouring of $P$.
 So we may assume that two vertices $s$ and $s'$ are adjacent to $[v_4]$.

Assume that there is a vertex $t$ adjacent to $[v_3]$, then there is no vertex adjacent to $[v_5]$.
Hence it suffices to find a safe $L$-colouring of $v_1v_2v_3v_4v_5$.
Indeed, if we have such a colouring $\phi$, then using Lemma~\ref{nicepath}, one can extend it to
a safe $L$-colouring of $P$.
Set $\phi(v_2)=\phi(v_4)=\alpha$. Doing so $t$ and $z$ will be safe.
If $\alpha$ or some colour $\beta \in L(v_5)\setminus \{\alpha\}$ is not contained in one of lists $L(s)$ and $L(s')$, say $L(s')$. Then colouring $v_5$ with $\beta$, if it exists, or any other colour otherwise, the vertex $s'$ will also be safe.
By Lemma~\ref{util}, one can colour $v_3$ so that $s$ is safe. By Lemma~\ref{util}, one can then
colour $v_1$ to obtain a colouring for which $u$ is safe. This  $L$-colouring of $v_1v_2v_3v_4v_5$ is safe.
Hence, we may assume that $L(s)=L(s')=L(v_5)$.
Colour $v_5$ with any colour in $L(v_5)\setminus \{\alpha\}$.
Using Lemma~\ref{util}, colour $v_3$ so that $s$ is safe. Then $s'$ will be also safe because $L(s)=L(s')$. Again by Lemma~\ref{util}, colour $v_1$ so that $u$ is safe to obtain a safe colouring of $v_1v_2v_3v_4v_5$.

Assume finally that no vertex is adjacent to $[v_3]$.
By Lemma~\ref{nicepath}, there is a safe $L$-colouring $\phi$ of $v_3 \dots v_p$.
If $\phi(v_4)\notin L(z)$, then assign to $v_2$ any colour in $L(v_2)\setminus \{\phi(v_3)\}$.
If not, then set $\phi(v_2)=\phi(v_4)$. (This is possible since $L(v_2)=L(z)$.) Then $z$ will be safe.
 By Lemma~\ref{util}, colour $v_1$ so that $u$ is safe to obtain a safe $L$-colouring of $P$.
\end{itemize}
\end{proof}

\subsection{Main theorem}
%%%%%%%%%%%%%%%%%%%%%%%%%%%%%

A drawing of $G$ is {\it nice} if two edges intersect at most once.
It is well known that every graph with crossing number $k$ has a nice drawing with at most $k$ crossings.
(See~\cite{EHLP11} for example.) In this paper, we will only consider nice drawings.
 Thus a crossing is uniquely defined
by the pair of edges it belongs to. Henceforth, we will confound a crossing with this set of two edges.
The {\it cluster} of a crossing $C$ is the set of endvertices of its two edges and is denoted $V(C)$.

\begin{theorem}\label{2dep}
Let $G$ be a graph having a drawing in the plane with two crossings. Then $\ch(G)\leq 5$.
\end{theorem}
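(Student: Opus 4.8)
The plan is to mimic and extend the proof of Theorem~\ref{cr1choos}: build a planar triangulation around the crossed edges, apply Thomassen's machinery, but handle the interaction between the two crossings by routing a \emph{good path} between them. Concretely, let $G$ have a nice drawing with two crossings $C_1$ and $C_2$. First I would reduce to the case where $G$ is ``edge-maximal'' with respect to having such a drawing: add as many edges as possible while keeping a drawing with these two crossings, so that the planarization $G^\times$ (replacing each crossing by a dummy vertex of degree $4$) is close to a triangulation. The four edges of the two crossings split into at most eight half-edges; the key structural fact I would establish is that, apart from the small neighbourhood of the crossings, $G$ looks planar, so Lemma~\ref{nice} and Theorem~\ref{suitable} apply to the pieces once the crossing clusters are coloured.

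The heart of the argument is to find, in $G$, an induced path $P$ from (a vertex of) $V(C_1)$ to (a vertex of) $V(C_2)$ that is a \emph{good path} and whose removal, together with the two crossing clusters, leaves a plane graph. The reason good paths are the right notion: by Lemma~\ref{goodpath}, $P$ admits a safe $L$-colouring $\phi$, i.e.\ every vertex of $G-P$ still has a list of size at least $3$ in $L_\phi$; the vertices with few remaining colours are exactly those in $Z_P$, and the good-path conditions are precisely what is needed so the interior structure near the crossings behaves. After colouring $P$, the graph $G-P$ is planar (the path was chosen to pass ``between'' the crossings so that deleting it removes all crossings), and along its outer face we have lists of size $\ge 3$ except possibly at a couple of precoloured neighbours of $P$; I would then invoke Theorem~\ref{suitable} (or Lemma~\ref{nice}) on each block of $G-P$ to finish the $L$-colouring.

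The steps in order: (1) take a nice $2$-crossing drawing and, by edge additions, make it maximal, recording the local structure around $C_1$ and $C_2$; (2) using this maximality together with Kuratowski's theorem to rule out unwanted minors, locate a shortest induced path $P$ joining the two crossing regions such that $G$ minus $P$ is planar and $P$ satisfies conditions (a), (b), (c) — or the ``good'' exception with the special vertex $z$ — this is where one argues that if $P$ fails to be good one can reroute or shorten it; (3) apply Lemma~\ref{goodpath} to get a safe $L$-colouring of $P$; (4) apply Theorem~\ref{suitable} to the planar remainder $G-P$ with the residual lists $L_\phi$, after checking the suitability hypotheses on the outer face of each block; (5) glue the colourings.

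The main obstacle I expect is step~(2): proving that a suitable good path always exists. One must handle the several possible configurations of how the two crossings sit relative to each other (sharing a vertex, sharing an edge region, being far apart, or one ``inside'' the other), and in each case show that a shortest path with the right endpoints is automatically induced and satisfies the nice/great/good conditions — or can be perturbed to do so without creating a $K_5$ or $K_{3,3}$ minor that would contradict the planarity of $G-P$. The delicate point is condition (c) of a great path and the good-path exceptions: these were tailored to exactly what minimality of such a path yields, so the bulk of the work is a careful case analysis showing ``shortest induced path between the clusters'' $\Rightarrow$ ``good path'', much as Thomassen's proof leans on choosing a minimal counterexample.
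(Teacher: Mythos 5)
Your overall strategy is the paper's strategy: take a minimal counterexample, triangulate, find an induced path joining the two crossings whose safe colouring (via Lemma~\ref{goodpath}) leaves a planar remainder to which Theorem~\ref{suitable} applies. But the proposal has a genuine gap exactly where you flag "the main obstacle": step~(2), the existence of the path, is stated as a plan rather than proved, and that existence argument \emph{is} the theorem — everything else is the already-established lemmas. It is not true that a shortest induced path between the clusters is "automatically" good or can be routinely "perturbed" to be good. The paper needs a long chain of structural claims before any path can be produced: minimum degree at least $5$ in a minimal counterexample, the absence of nicely separating triangles and $4$-cycles, a separate disposal of the cases $dist(C_1,C_2)=0$ and $dist(C_1,C_2)=1$ (each requiring its own ad hoc path construction and Kuratowski-type arguments), then a rerouting argument showing some shortest $(C_1,C_2)$-path is nice (this uses an exchange argument replacing $x_i$ by a vertex $z_2$ with the same closed neighbourhood on the path, and a careful tracking of which regions contain the crossings), then an extension to a path $Q$ with endpoints inside the clusters satisfying extra properties \ref{q3}--\ref{q4}, and finally a case analysis on whether a vertex sees both $x_0$ and $x_4$. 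None of this is routine, and several of the cases do not end with a good path at all in your sense.

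A second, smaller but real, imprecision: the colouring of the remainder does not go through with "lists of size $\ge 3$ except possibly at a couple of precoloured neighbours" unless those two exceptional vertices are \emph{adjacent to each other} and can be placed consecutively on the outer face of $G-Q$, which is what the paper's notion of a \emph{friendly} (hence \emph{magic}) path encodes: two adjacent vertices $x,y$ with $|N_Q(x)|\le 4$ and $|N_Q(y)|\le 3$ such that $Q$ is good in $G-\{x,y\}$, so that $L_\phi$ is $\{x,y\}$-suitable. Without building that adjacency requirement into the definition of the path you are searching for, step~(4) fails: Theorem~\ref{suitable} tolerates short lists only at two consecutive outer-face vertices. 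So the proposal correctly identifies the architecture but omits both the precise target object (a magic path) and the entire structural analysis that produces it; as written it does not constitute a proof.
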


\begin{proof}
By considering a counter-example $G$ with the minimum number of vertices. Let $L$ be a $5$-list assignment of $G$ such that $G$ is not $L$-colourable.

Let $C_1$ and $C_2$ be the two crossings.
By Theorem~\ref{cr1choos}, $C_1$ and $C_2$ have no edge in common.
%Let $v$ be one of the common vertex of $C_1$ and $C_2$.
Set $C_i=\{v_iw_i, t_iu_i\}$.
Free to add edges and to redraw them along the crossing, we may assume that $v_iu_i$, $u_iw_i$, $w_it_i$ and $t_iv_i$ are edges and that the $4$-cycle $v_iu_iw_it_i$ has no vertex inside but the two edges of $C_i$.
In addition, we assume that $u_1v_1t_1w_1$ appear in clockwise order around the crossing point of $C_1$ and that
$u_2v_2t_2w_2$ appear in counter-clockwise order around the crossing point of $C_2$. 
Free to add edges, we may also assume that $G\setminus \{v_1w_1, v_2w_2\}$ is a triangulation of the plane. In the rest of the proof,
for convenience, we will refer to this fact by writing that $G$ is {\it triangulated}.

\begin{claim}\label{deg5}
Every vertex of $G$ has degree at least $5$.
\end{claim}
\begin{proof}
Suppose not. Then $G$ has a vertex $x$ of degree at most $4$.
By minimality of $G$, $G-x$ has an $L$-colouring $\phi$.
Now assigning to $x$ a colour in $L(x)\setminus \phi(N(x))$ we obtain an $L$-colouring of $G$, a contradiction.
\end{proof}

A cycle is {\it separating} if none of its edges is crossed and
both its interior and exterior contain at least one vertex.
A cycle is {\it nicely separating} if
it is separating and its interior or its exterior has no crossing.

\begin{claim}\label{sep3}
$G$ has no nicely separating triangle.
\end{claim}
\begin{proof}
Assume, by way of contradiction, that a triangle $T=x_1x_2x_3$ is nicely separating. 
Let $G_1$ (resp. $G_2$) be the subgraph of $G$ induced by the vertices on $T$ or outside $T$ (resp. inside $T$).
Without loss of generality, we may assume that $G_2$ is a plane graph.

By minimality of $G$, $G_1$ has an $L$-colouring $\phi_1$.
Let $L_2$ be the list assignment of $G_2$ defined by
$L_2(x_1)=\{\phi_1(x_1)\}$, $L_2(x_2)=\{\phi_1(x_1), \phi_1(x_2)\}$,
$L_2(x_3)=\{\phi_1(x_1), \phi_1(x_2), \phi_1(x_3)\}$, and
$L_2(x)=L(x)$ for every vertex inside $T$.
Then $L_2$ is a suitable list assignment of $G_2$, so by Theorem~\ref{suitable}, $G_2$ admits an $L_2$-colouring $\phi_2$.
Observe that necessarily $\phi_2(x_i)=\phi_1(x_i)$.
Hence the union of $\phi_1$ and $\phi_2$ is an $L$-colouring of $G$, a contradiction.
\end{proof}

\begin{claim}\label{4cycle}
Let $C=abcd$ be a $4$-cycle with no crossing inside it.
If $a$ and $c$ have no common neighbour inside $C$ then $C$ has no vertex in its interior.
\end{claim}
\begin{proof}
Assume by way of contradiction that the set $S$ of vertices inside $C$ is not empty.

Then $ac$ is not an edge otherwise one of the triangles $abc$ and $acd$ would be nicely separating.
Since $G$ is triangulated, the neighbours of $a$ (resp. $c$) inside $C$ plus $b$ and $d$ (in cyclic order around $a$ (resp. $c$)) form
a $(b,d)$-path $P_a$ (resp. $P_c$).
The paths $P_a$ and $P_c$ are internally disjoint because $a$ and $c$ have no common neighbour inside $C$. Hence $P_a\cup P_c$ is a cycle
$C'$. Furthermore $C'$ is the outerface of $G'=G\langle S\cup\{b,d\}\rangle$.

By minimality of $G$, $G_1=(G-S)\cup bd$ admits an $L$-colouring $\phi$.
Let $L'$ be the list-colouring of $G'$ defined by
$L'(b)=\{\phi(b)\}$, $L'(d)=\{\phi(d)\}$,
$L'(x)=L(x)\setminus \{\phi(a)\}$ if $x$ is an internal vertex of $P_a$,
$L'(x)=L(x)\setminus \{\phi(c)\}$ if $x$ is an internal vertex of $P_c$,
and $L'(x)=L(x)$ if $x\in V(G'-C')$.
Then $L'$ is a $\{b,d\}$-correct list assignment of $G'$.
Hence, by Lemma~\ref{nice}, $G'$ admits an $L'$-colouring $\phi'$.
The union of $\phi$ and $\phi'$ is an $L$-colouring of $G$, a contradiction.
\end{proof}

\begin{claim}\label{sep4}
$G$ has no nicely separating $4$-cycle.
\end{claim}
\begin{proof}
Suppose not. Then there exists a nicely separating $4$-cycle
$abcd$.
Let $b=z_1, z_2, \dots , z_{p+1}=d$ be the common neighbours of $a$ and $c$ in clockwise order around $a$. 
By Claim~\ref{4cycle}, we have $p\geq 2$.
Each of the $4$-cycles
$az_icz_{i+1}$, $1\leq i\leq p$ has empty interior by Claim~\ref{4cycle}. So $z_2$ has degree at most $4$. This contradicts Claim~\ref{deg5}.
\end{proof}

A path $P$ is {\it friendly} if there are two adjacent vertices $x$ and $y$ such that
$|N_P(x)|\leq 4$, $|N_P(y)|\leq 3$ and $P$ is good in $G-\{x,y\}$.
A path $P$ {\it meets} a crossing if it contains at least one endvertex of each of the two crossed edges.
A {\it magic path} is a friendly path meeting both crossings.

\begin{claim}\label{nomagic}
$G$ has no magic path $Q$. 
\end{claim}
\begin{proof}
Suppose for a contradiction that $G$ has a magic path $Q$.
Then there exists two adjacent vertices $x$ and $y$ such that $|N_Q(x)|\leq 4$, $|N_Q(y)|\leq 3$ and $P$ is good in $G-\{x,y\}$. Lemma~\ref{goodpath}, there in a $L$-colouring $\phi$ of $Q$ such that every vertex $z$ of $(G-Q)-\{x,y\}$
satisfies $|L_{\phi}(z)|\geq 3$. Now  $|L_{\phi}(x)|\geq 1$ and $|L_{\phi}(y)|\geq 2$, because  $|N_Q(x)|\leq 4$ and $N_Q(y)\leq 3$ 
Since $Q$ meets the two crossings, $G-Q$ is planar. Furthermore, 
$G-Q$ may be drawn in the plane such that all the vertices on the outer face are those of $N(Q)$.
Hence $L_{\phi}$ is a suitable assignment of $G-Q$.
Hence by Theorem~\ref{suitable}, $G-Q$ is $L_{\phi}$-colourable and so $G$ is $L$-colourable, a contradiction.
\end{proof}

In the remaining of the proof, we shall prove that $G$ contains a magic path, thus getting a contradiction.
Therefore, we consider {\it shortest $(C_1,C_2)$-paths}, that are paths joining $C_1$ and $C_2$ with the smallest number of edges.
We first consider the cases when the distance between $C_1$ and $C_2$ is 0 or 1.
We then deal with the general case when $dist(C_1,C_2)\geq 2$.

\begin{claim}\label{dist0}
$dist(C_1,C_2) >0$.
\end{claim}
\begin{proof}
Assume for a contradiction that $dist(C_1,C_2) =0$. Then, without loss of generality, $v_1 = v_2$. 
Note that $u_1\neq u_2$ as otherwise the path $u_1v_1$ would be magic, contradicting Claim~\ref{nomagic}.
Similarly, we have $t_1\neq t_2$.

%\vspace{6pt}

Note that $w_1$ is not adjacent to $u_2$ for otherwise both the interior and exterior of $w_1 u_1 v_1 u_2$ would contain at least one neighbour of $u_1$ by Claim~\ref{deg5}. 
Thus this $4$-cycle would be nicely separating, a contradiction to Claim~\ref{sep4}.
Henceforth, by symmetry, $w_1$ is not adjacent to $u_2$ nor $t_2$ and $w_2$ is not adjacent to $u_1$ nor $t_1$.

%\vspace{6pt}

If $u_1$ is not adjacent to $u_2$, then consider the induced path $Q = u_1 v_1 u_2$.
Since $w_1$ and $w_2$ are not adjacent to $u_2$ and $u_1$, respectively, then $\{w_1, w_2\}\cap Z_Q = \emptyset$.
The vertices $t_1$ and $t_2$cannot be both in $Z_Q$ for otherwise $u_1t_2$ and $u_2t_1$ would cross.
Furthermore, if $z_1$ and $z_2$ are distinct vertices in $Z_Q\setminus \{t_1,t_2\}$, then either $u_1v_1u_2z_1$ nicely separates $z_2$ or $u_1v_1u_2z_2$ nicely separates $z_1$ contradicting Claim~\ref{sep4}. 	
Thus, $|Z_Q|\le 2$ and $Q$ is magic contradicting Claim~\ref{nomagic}.
Henceforth, $u_1$ is adjacent to $u_2$,  and, by a symmetrical argument, $t_1$ is adjacent to $t_2$.

%\vspace{6pt}

If $u_1$ is adjacent to $t_2$, then both the interior and exterior of $u_1u_2w_2t_2$ contain at least one neighbour of $w_2$ by Claim~\ref{deg5}. Thus this $4$-cycle would be nicely separating, a contradiction to Claim~\ref{sep4}.
Henceforth,  $u_1$ is not adjacent to $t_2$, and symmetrically $t_1$ is not adjacent to $u_2$.

%\vspace{6pt}

Therefore $Q = u_1v_1t_2$ is an induced path.
Note that $Z_Q\subseteq N(v_1)$.
The triangles $v_1u_1u_2$ and $v_1t_1t_2$ together with Claim~\ref{sep3} imply that $N(v_1) = \{u_1, u_2, t_1, t_2, w_1, w_2\}$.
Since $w_1$ is not adjacent to $t_2$ and $w_2$ is not adjacent to $u_1$, then $Z_Q = \{u_2, t_1\}$.
Thus $Q$ is magic contradicting Claim~\ref{nomagic}.
\end{proof}

\begin{claim}\label{nodiag}
Let $i\in \{1,2\}$ and $x$ a vertex not in $C_i$. 
Then at most one vertex in $\{u_i, t_i\}$ is adjacent to $x$ and at most one vertex in $\{v_i, w_i\}$ is adjacent to $x$.

\end{claim}
\begin{proof}
Assume for a contradiction that $x$ is adjacent to both $u_i$ and $t_i$.
Observe that the edges $u_ix$ and $t_ix$ are not crossed since $dist(C_1,C_2)\geq 1$.
Then one of the two $4$-cycles $u_iv_it_ix$  and $u_iw_it_ix$ is nicely separating. Thus the region bounded by this cycle has no vertex by Claim~\ref{sep4}. Hence either $d(v_i)\le 4$ or $d(w_i)\le 4$. This  contradicts Claim~\ref{deg5}.

Similarly, one shows that  at most one vertex in $\{v_i, w_i\}$ is adjacent to $x$.
\end{proof}

\begin{claim}\label{dist1}
$dist(C_1,C_2) >1$.
\end{claim}
\begin{proof}
Assume for a contradiction that $dist(C_1,C_2)=1$.
Without loss of generality, we may assume that $v_1v_2\in E(G)$.

\vspace{6pt}

Let us first show that without loss of generality, we may assume that $u_1$ is not adjacent to $v_2$ and $u_2$ is not adjacent to $v_1$.
By symmetry, if  $t_1$ is not adjacent to $v_2$ and $t_2$ is not adjacent to $v_1$, then we get the result by renaming
swapping the names of $u_i$ and $t_i$, $i=1,2$.
Thus by symmetry and by Claim~\ref{nodiag}, if it not the case, then $u_1v_2 \in E(G)$ and $v_1t_2\in E(G)$.
Moreover $w_1v_2$ is not an edge by Claim~\ref{nodiag}.
Hence renaming $u_1$, $v_1$, $t_1$, $w_1$ into  $v_1$, $t_1$, $w_1$, $u_1$ respectively, we are in the desired configuration.
\vspace{6pt}

The vertices $u_1$ and $u_2$ are not adjacent, for otherwise the cycle $u_1v_1v_2u_2$ would be nicely separating since $G$ is triangulated and $u_1v_2$ and $u_2v_1$ are not edges. So $Q$ is an induced path.

A vertex of $Z_Q$ is {\it goofy} if it is adjacent to $u_1$ and $u_2$.

\begin{itemize}

\item Suppose first that there is a goofy vertex $z'$ not in $C_1\cup C_2$.

Without loss of generality, we may assume that $z'$ is adjacent to $u_1$, $v_1$ and $u_2$.
If the crossing $C_1$ is inside $z'u_1v_1$, then consider the path $R=t_1v_1v_2u_2$.
It is induced since  $z'u_1v_1$ separates $t_1$ from $v_2$ and $u_2$.
Moreover all the neighbours of $t_1$ are inside $z'u_1v_1$, so they have at most two neighbours in $R$ except for $u_1$ which is not adjacent to $v_2$ nor to $u_2$.
Hence the vertices of $Z_R$ are all adjacent to $\{v_1,v_2,u_2\}$. Moreover $w_2\notin Z_R$ because $w_2v_1$ is not an edge by Claim~\ref{nodiag}.
Hence by planarity of $G-\{w_1,w_2\}$, there are at most two vertices adjacent to $\{v_1,v_2,u_2\}$.
Thus $R$ is magic, a contradiction.

Hence we may assume that $C_1$ is outside $z'u_1v_1$. The $4$-cycle $z'v_1v_2u_2$ is not nicely separating by Claim~\ref{sep4}, and $G$ is triangulated. So  $z'v_2\in E(G)$ because $v_1$ is not adjacent to $u_2$.  So $z'$ is adjacent to all vertices of $Q$.

Then there is no other vertex $z''$ in $Z_Q\setminus \{C_1\cup C_2\}$, for otherwise one of the crossing $C_i$ is inside $u_iv_iz''$ and
as above, we obtain the contradiction that $R$ is magic.

Now $w_1u_2$ is not an edge, for otherwise $w_1u_1z'u_2$ would be separating since $d(u_1)\geq 5$, a contradiction to Claim~\ref{sep4}.
Similarly, $w_2u_1$ is not an edge.
 Hence $Z_Q\subset \{z',t_1,t_2\}$.
Now one of the edges $t_1u_2$ and $t_2u_1$ is not in $E(G)$, since otherwise they would cross.
Without loss of generality, $t_1$ is not adjacent to $u_2$.
Then $Q$ is good in $G-t_2$, and so $Q$ is magic. This contradicts Claim~\ref{nomagic}.

\item Suppose now that all the goofy vertices of $Z_Q$ are in $C_1\cup C_2$.

Suppose first that $w_1$ is in $Z_Q$, then $w_1u_2$ is an edge 
because  $w_1$ is not adjacent to $v_2$ according to Claim~\ref{nodiag}. 
Thus $t_2$ and $w_2$ are not adjacent to $u_1$. So $w_2\notin Z_Q$ and
$N_Q(t_2)\subset \{v_1,v_2,u_2\}$, so $t_2$ is not goofy.
Moreover by planarity of $G-\{w_1,w_2\}$, there is at most two vertices adjacent $\{v_1,v_2,u_2\}$.
Furthermore, all the vertices distinct from $t_1$ and adjacent to $\{u_1,v_1, v_2\}$ are in the region bounded by $w_1v_1v_2u_2$ containing $u_1$. Therefore there is at most one such vertex.
Hence $Q$ is good in $G-\{w_1,t_1\}$.  Thus $Q$ is magic and contradicts Claim~\ref{nomagic}.

Similarly, we get a contradiction if $w_2\in Z_Q$. 
So $Z_Q\cap (C_1\cup C_2)\subseteq \{t_1,t_2\}$.
Then easily $Q$ is good in $G-t_2$ and so $Q$ is magic. This contradicts Claim~\ref{nomagic}.
\end{itemize}%
\end{proof}

\begin{claim}\label{shortestnicepath}
Some of the shortest $(C_1,C_2)$-paths is nice.
\end{claim}
\begin{proof}
Let $P = x_1x_2\cdots x_p$ be any  shortest $(C_1,C_2)$-path. Then no vertex in $C_1$ is adjacent to a vertex in $P-\{x_1,x_2\}$. Therefore, $V(C_1)\cap Z_P = \emptyset$. Similarly, we have $V(C_2)\cap Z_P = \emptyset$. Hence the graph $G'$ induced by  $V(P)\cup Z_P$ is planar as it contains exactly one vertex from each crossing.

Any vertex not in $P$ can be adjacent only to vertices of $P$ at distance at most two from each other, otherwise
there would be a $(C_1,C_2)$-path shorter than $P$. Thus, if $z\in Z_P$, then $z$ has precisely three neighbours in $P$. Moreover, there exists an $i\in \{2, \dots, p-1\}$ such that $N_P(z) = [x_i]$.

If there are distinct vertices $z_1, z_2, z_3 \in Z_P$ such that $N_P(z_1) = N_P(z_2) = N_P(z_2) = [x_i]$ for some value of $i$, then the subgraph of $G'$ induced by $\{z_1, z_2, z_3\}\cup \{x_{i-1}, x_i, x_{i+1}\}$ contains a $K_{3,3}$. By Kuratowski's Theorem, this contradicts the fact that $G'$ is planar.  Therefore, for every $2\leq i \leq p-1$, there are at most two vertices in $Z_P$ adjacent to $[x_i]$.

\vspace{6pt}

Let  $z_1, z_2 \in Z_P$ be such that $N_P(z_1) = N_P(z_2) = [x_i]$. The edges of $H = G[\{z_1, z_2\}\cup [x_i]]$ separate the plane into five regions $R_1, \dots, R_5$ as follows. Let $R_1$ be the region bounded by $x_{i-1}x_iz_1$ not containing the vertex $z_2$, $R_2$ be the region bounded by $x_ix_{i+1}z_1$ not containing the vertex $z_2$, $R_3$ be the region bounded by $x_{i-1}x_iz_2$ not containing the vertex $z_1$, $R_4$  be the region bounded by $x_ix_{i+1}z_2$ not containing the vertex $z_1$ and $R_5$ be the region bounded by $x_{i-1}z_1x_{i+1}z_2$ not containing $x_i$ (see Figure~\ref{sameneigh}). Since $(V(C_1)\cup V(C_2))\cap Z_P = \emptyset$ and $P$ is a shortest $(C_1,C_2)$-path, then no edge in $H$ is crossed.
\begin{figure}[!hbt]
\begin{center}
\scalebox{.7}{\input{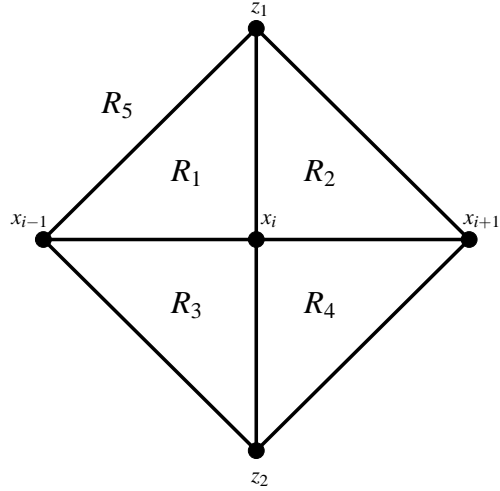}}
\caption{Regions $R_1$, $R_2$, $R_3$, $R_4$ and $R_5$.}\label{sameneigh}
 \end{center}
\end{figure}

\vspace{6pt}

Let $J_P$ be the subset of $\{3, \ldots, p-2\}$ such that for $j\in J_P$, there are two vertices in $Z_P$ adjacent to $[x_j]$ and at least one vertex adjacent to $[x_{j-1}]$ and another adjacent to $[x_{j+1}]$.
The path $P$ is said to be {\it semi-nice} if $J_P=\emptyset$.

Let us first prove that some of the  shortest $(C_1,C_2)$-paths is semi-nice.

\begin{itemize}
\item[] Suppose for a contradiction that no shortest $(C_1,C_2)$-path is semi-nice.
Let $P$ be a shortest $(C_1,C_2)$-path that maximizes the smallest index $i$ in $J_P$. 
Let  $z_1, z_2 \in Z_P$ be such that $N_P(z_1) = N_P(z_2) = [x_i]$.
 
Let $z\in Z_P$ be a vertex adjacent to $[x_{i+1}]$. If $C_2$ is in $R_5$, then so is $x_{i+2}$ and we get a contradiction from the fact that either $zx_i$ or $zx_{i+2}$ must cross an edge of $H$. Since $P$ defines a path between $x_{i+1}$ and $V(C_2)$, then $C_2$ must be either in $R_2$ or in $R_4$ (say $R_4$). Similarly, $C_1$ is either in $R_1$ or in $R_3$. The cycle $x_{i-1}x_ix_{i+1}z_2$ is not be a nicely separating cycle by Claim~\ref{sep4}, so $C_1$ must be in $R_1$. Now, by Claim~\ref{sep3}, $R_2$ and $R_3$ are empty, and, by Claim~\ref{sep4},  there is no vertex in $R_5$. Since $P$ is a shortest path, $x_{i-1}x_{i+1}$ is not an edge and therefore $z_1$ is adjacent to $z_2$ as $G$ is triangulated.

Now, consider the path $P'$ obtained from $P$ by replacing $x_i$ with $x'_i = z_2$. Note that $P'$ is also a shortest path and that both $z_1$ and $x_i$ are adjacent to $[x'_i]$. Since no edge in $H$ is crossed, for any $v\in V(G)\setminus (\{z_1, z_2\}\cup [x_i])$,  if $v$ is adjacent to $x_{i-1}$ then it must be in $R_1$ and if $v$ is adjacent to  $z_2$ then it must be in $R_4$. Therefore, there is no vertex in $Z_{P'}$ adjacent to $\{x_{i-2}, x_{i-1},  z_2\}$. This implies that if $j\in J_{P'}$, then either $j \le i-3$ or $j \ge i+1$. Note that if $j\in J_{P'}$ and  $j \le i-3$, then $j\in J_P$. As $i$ is the minimum of $J_P$, the minimum of $J_{P'}$ is at least $i+1$. This contradicts our choice of $P$.
\end{itemize}
\vspace{6pt}

Let $K_P$ be the subset of $\{2, \ldots, p-1\}$ such that for $k\in K_P$, there are two vertices in $Z_P$ adjacent to $[x_k]$ and two vertices adjacent to $[x_{k+1}]$. 
Observe that a nice path $P$ is a semi-nice path such that $K_P$ is empty, that is a path such that $J_P$ and $K_P$ are empty. 

Suppose, by way of contradiction, that every $(C_1,C_2)$-shortest path is not nice.
Then consider the semi-nice  $(C_1,C_2)$-shortest path that maximizes the minimum of $K_P$.

Let  $z_1, z_2, z_3, z_4 \in Z_P$ be such that $N_P(z_1) = N_P(z_2) = [x_i]$ and $N_P(z_3) = N_P(z_4) = [x_{i+1}]$, where $i$ is the smallest index in $K_P$. Recall that the edges of $H = G[\{z_1, z_2\}\cup [x_i]]$ separate the plane into the five above-described regions $R_1, \ldots, R_5$. Again, we can use $z_3$ or $z_4$ to prove that $C_2$ is either in $R_2$ or in $R_4$ (say $R_4$). Therefore, $x_{i+2}$ is in $R_4$ which implies $z_3$ and $z_4$ are also in $R_4$. Thus, $z_1$ is not adjacent to $z_3$ nor $z_4$. Furthermore, $z_2$ cannot be adjacent to both $z_3$ and $z_4$ for otherwise we can obtain a $K_5$ in the subgraph of $G'$ induced by $[x_{i+1}]\cup \{z_2, z_3, z_4\}$ by contracting the edge $z_4x_{i+2}$ (see Figure~\ref{K5minor}). Thus, without loss of generality, suppose $z_2$ and $z_3$ are not adjacent.
\begin{figure}[!hbt]
\begin{center}
\scalebox{.7}{\input{K5minor.pspdftex}}
\caption{$K_5$ minor of $G'$ is obtained by contracting $z_4x_{i+2}$.}\label{K5minor}
 \end{center}
\end{figure}

 Consider the path $P'$ obtained from $P$ by replacing $x_{i+1}$ with $x'_{i+1} = z_3$. Since no edge in $H$ is crossed,  for any $v\in V(G)\setminus (\{z_1, z_2\}\cup [x_i])$,  if $v$ is adjacent to $x_{i-1}$ then it is not in $R_4$, and if $v$ is adjacent to  $z_3$ then it must be in $R_4$. Since neither $z_1$ nor $z_2$ are adjacent to $z_3$ and $x_{i+1}$ is not adjacent to $x_{i-1}$, there is no vertex in $Z_{P'}$ adjacent to $\{x_{i-1}, x_i, z_3\}$. This implies that if $k\in K_{P'}$, then either $k\le i-2$ or $k\ge i+1$. Note that if $k\in K_{P'}$ and  $k \le i-2$, then $k\in K_P$. This implies that the minimum index in $K_{P'}$ is strictly greater than $i$.  Hence by our choice of $P$, the path $P'$ is not semi-nice, that is $J_{P'} \neq \emptyset$.

 Observe that if $j\in J_{P'}$, then either $j\le i-2$ or $j\ge i+2$. Note that if $j\in J_{P'}$ and  either $j \le i-2$ or $j\ge i+4$, then $j\in J_P$.  Since $J_P$ is empty, then $J_{P'}\subseteq \{i+2, i+3\}$.
Let $z'_1,z'_2\in Z_{P'}$  be such that $N_{P'}(z'_1) = N_{P'}(z'_2) = [x'_j]$, for some $j\in J_{P'}$ with $J_{P'}\subseteq \{i+2, i+3\}$. Note that for the two possible values of $j$, both $z'_1$ and $z'_2$ are adjacent to $x_{i+3}$. Since $P$ is a shortest $(C_1,C_2)$-path,  neither $z_2$ nor $x_{i+1}$ are adjacent to $x_{i+3}$ and therefore $z'_1$ and $z'_2$ are in $R_4$. Let $R'_1$ be the region bounded by $x'_{j-1}x'_jz'_1$ not containing the vertex $z'_2$ and  $R'_3$ be the region bounded by $x'_{j-1}x'_jz'_2$ not containing the vertex $z'_1$. Both of these regions are contained in $R_4$. With the same argument used above in the proof of existence of a semi-nice path, one shows that if $j\in J_{P'}$, then $C_1$ is either contained in $R'_1$ or in $R'_3$. We get a contradiction as the path $P$ from $V(C_1)$ to $x_{i-1}$ crosses an edge of $H$. 
\end{proof}

\begin{claim}\label{greatpath}
There exists an induced path $Q = x_0 x_1 \cdots x_p x_{p+1}$ with the following properties:
{\renewcommand{\theenumi}{P${}_\arabic{enumi}$}
\begin{enumerate}
\item \label{q1} $P = x_1 \cdots x_p$ is a shortest $(C_1,C_2)$-path and is a nice path;
\item \label{q2} $x_0\in V(C_1)$ and $x_{p+1}\in V(C_2)$ but $x_0 x_1$ and $x_p x_{p+1}$ are not crossed edges; and
\item \label{q3} there is at most one vertex in $Z_Q$ adjacent to both vertices in $\{x_0, x_3\}$ and at most one vertex in $Z_Q$ adjacent to both vertices in $\{x_{p-2}, x_{p+1}\}$.
\item \label{q4} for any $i<j$, if there are two vertices adjacent to $[v_i]$ and two vertices adjacent to $[v_j]$,  then the number of vertices adjacent to $[v_{i+1}]$ or to $[v_{j-1}]$ is at most $1$.
%\item \label{q4} for any $i<j$ such that two vertices are adjacent to $[x_i]$ and two vertices are adjacent to $[x_j]$,  there is $i<k<j$ such that no vertex is adjacent to $[x_k]$.
\end{enumerate}
}
\end{claim}

\begin{proof}
By Claim~\ref{shortestnicepath} there exists a shortest $(C_1,C_2)$-path $P = x_1 \cdots x_p$ which is nice. Without loss of generality, we may assume that $x_1=v_1$ and $x_p=v_2$.
According to Claim~\ref{nodiag}, we can choose vertices $x_0\in \{u_1, t_1\}$ and $x_{p+1}\in \{u_2, t_2\}$ such that $Q$ is induced. Therefore, we have at least one path satisfying properties \ref{q1} and \ref{q2}. We say that $x_0$ is a {\it valid endpoint} if there is at most one vertex in $Z_Q$ adjacent to both vertices in  $\{x_0, x_3\}$ and $x_{p+1}$ is a {\it valid endpoint} if there is at most one vertex in $Z_Q$ adjacent to both vertices in  $\{x_{p-2}, x_{p+1}\}$.

Let $Q$ be a path satisfying properties \ref{q1} and \ref{q2} which maximizes the number of valid endpoints of $Q$.
 
 \vspace{12pt}
 
 Let us first show that $Q$ has only valid endpoints, and satisfies  property~\ref{q4}. By contradiction, suppose that $Q$ has an invalid endpoint. Without loss of generality, $x_0$ is invalid. 
 
Let $z_1, z_2\in Z_Q$ be two vertices adjacent to both vertices in $\{x_0,  x_3\}$.  
 Since $P$ is a shortest $(C_1,C_2)$-path, no vertex of $C_1$ is adjacent to $x_3$. Therefore, no edge of 
 $x_0 x_1 x_2 x_3 z_1$ and $x_0 x_1 x_2 x_3 z_2$  is crossed.
Let $R_1$ be the region bounded by $x_0 x_1 x_2 x_3 z_1$ that does not contain $z_2$ and  $R_2$ be the region bounded by $x_0 x_1 x_2 x_3 z_2$ that does not contain $z_1$. Since the edges bounding the regions $R_1$ and $R_2$ are not crossed, then the crossing $C_1$ is contained in one of the regions $R_1$ or $R_2$ (say $R_1$). Let $\hat x_0$ be the vertex of $\{u_1,t_1\}\setminus \{x_0\}$ (see Figure~\ref{invalid}). 

\begin{figure}[!hbt]
\begin{center}
\scalebox{.8}{\input{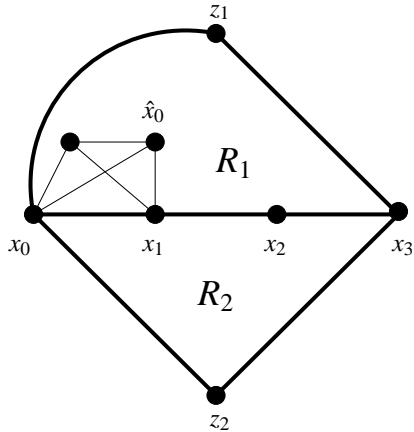}}
\caption{Regions $R_1$ and $R_2$ and the vertex $\hat x_0$.}\label{invalid}
 \end{center}
\end{figure}

Assume first that  $\hat x_0$ is not adjacent to $x_2$.
Let $\hat Q$ be the path obtained from $Q$ by replacing $x_0$ with $\hat x_0$. 
Clearly the path $\hat Q$ is induced and satisfies properties \ref{q1} and \ref{q2}. 
By definition of $Q$, $\hat x_0$ must be an invalid endpoint.
Hence, there is a vertex $\hat z$ in $Z_{\hat Q}\setminus \{z_1\}$  which is adjacent to $\hat x_0$ and $x_3$.
This vertex in necessarily inside $R_1$ because it is adjacent to $x_0$.
But then, by planarity, $z_1$ cannot be adjacent to $x_1$ and $x_2$, a contradiction to $z_1\in Z_Q$.

Assume now that $\hat x_0$ is adjacent to $x_2$. Let $Q'$ be the path obtained from $Q$ by replacing $x_0$ with $w_1$ and $x_1$ with $\hat x_0$. Note that $Q'$ is induced as $w_1$ is not adjacent to $x_2$ by Claim~\ref{nodiag}. 

Note that property \ref{q2} is valid for $Q'$. The path $P' = \hat x_0 x_2 \cdots x_p$ is a $(C_1,C_2)$ shortest path. 
Let us prove that $P'$ is nice and so that $P'$ satisfies property \ref{q1}. 
If $p=3$, then, since no vertex in the cluster of $C_1$ is adjacent to $x_3$, at most two vertices are in $Z_{P'}$ for otherwise we would get a $K_{3,3}$ in $G-\{w_1,w_2\}$, which is impossible as this graph is planar. Thus $P'$ is nice. Suppose now that $p\ge 4$.  By planarity,  $z_1$ is not adjacent to $x_1$, so $z_1$ is adjacent to $x_2$ as $z_1\in Z_Q$. In addition, $z_1x_2$ is contained in $R_1$. Thus, any vertex in $Z_{P'}$ adjacent to $\hat x_0$ must be in region $R_1$ and cannot be adjacent to $x_3$. Hence no vertex is adjacent to $[x_2]_{P'}$ so, since $P$ is a nice path,  $P'$ is also a nice path.

By definition of $Q$, $w_1$ must be an invalid endpoint of $Q'$. 
Hence, there is a vertex $z'$ in $Z_{Q'}\setminus \{z_1\}$  which is adjacent to $w_1$ and $x_3$.
This vertex in necessarily inside $R_1$ because neither $x_0$ nor $x_1$ are  adjacent to $x_3$.
But then, by planarity, $z_1$ cannot be adjacent to $x_1$ and $x_2$, a contradiction to $z_1\in Z_Q$.

\vspace{12pt}

Let us now prove that $Q$ satisfies property~\ref{q4}.
By contradiction, suppose $Q$ does not. Let $z_1, z_2, z'_1, z'_2 \in Z_Q$ be such that both $z_1$ and $z_2$ are adjacent to $[x_i]$ and $z'_1$ and $z'_2$ are adjacent to $[x_j]$. Consider the regions $R_1, \ldots, R_5$ related to $z_1$ and $z_2$ used in Figure \ref{sameneigh}.  Consider the regions $R'_1, \ldots, R'_5$ related to $z'_1$ and $z'_2$ used in Figure \ref{sameneigh} for $i=j$.

Let $z\in Z_Q$ be adjacent to $[x_{i+1}]$. Note that we can have $\{z_1, z_2\} \cap \{u_1, t_1\} \neq \emptyset$ if $i=1$. But since $dist(C_1, C_2)\ge 2$, the edges $z_1x_{i+1}$ and $z_2x_{i+1}$ are not crossed. Furthermore, since no vertex in the cluster of $C_1$ is adjacent to $x_3$ and not vertex in the cluster of $C_2$ is adjacent to $x_1$ ($P$ is a shortest $(C_1,C_2)$-path), then $z$ is not in the cluster of either crossing.

Therefore, since $z$ is adjacent to both $x_i$ and $x_{i+2}$, we must have that both $z$ and $x_3$ are in $R_2$ or in $R_4$ (say $R_2$). This also implies that $C_2$ is in $R_2$. Note also that, by our choice of $x_0$, the edges $z_1x_i$ and $z_2 x_i$ are not crossed. Therefore, $C_1$ is contained in $R_1\cup R_3\cup R_5$. With a symmetric argument, we have that $C_1$ is either in $R'_1$ or in $R'_3$ (say $R_1$). Since both $z'_1$ and $z'_2$ are also in $R_2$, then $R'_1\cup R'_3$ are contained in $R_2$ and we get a contradiction.
\end{proof}

Let $Q$ be a path given by Claim~\ref{greatpath}.
Without loss of generality, suppose $x_1 = v_1$ and $x_p = v_2$. Note also that Claim~\ref{nodiag} implies $w_1$ and $w_2$ are not in $Z_Q$ and therefore $G[V(Q)\cup Z_Q]$ is planar.

\begin{claim}\label{dist>2}
 $dist(C_1,C_2)=2$ and there is a vertex adjacent to $x_0$ and $x_4$.
\end{claim}
\begin{proof}
Suppose not. Then no vertex in $Z_Q$ is adjacent to vertices at distance at least four in $Q$. 
Observe that this is the case when $dist(C_1,C_2)\geq 3$, since $x_1\dots x_p$ is a shortest $(C_1,C_2)$-path.

Since $P$ is a nice and shortest $(C_1,C_2)$-path, then the only vertices in $Z_Q$ adjacent to vertices at distance at least three in $Q$ must be adjacent to both $x_0$ and $x_3$ or to both $x_{p-2}$ and $x_{p+1}$.
 By the property~\ref{q3} of Claim~\ref{greatpath}, there is at most  one vertex, say $z$,  adjacent to $x_0$ and $x_3$
 and at most one vertex, say $z'$, adjacent to $x_{p-2}$ and $x_{p+1}$.

Let us make few observations.
\begin{itemize}
\item[Obs. 1]
If two vertices $z_1$ and $z_2$ distinct from $z$ are adjacent to $[x_2]$, then no vertex is adjacent to $[x_1]$ and $N_Q(z) = \{x_0, x_1, x_3\}$.
Indeed $z$ must be in the region $R_5$ in Figure \ref{sameneigh} because it is adjacent to $x_0$ and $x_3$. 
By the planarity of $G[V(Q)\cup Z_Q]$ and since $z$ is adjacent to $x_0$, $x_0$ must also be in $R_5$. 
Again by planarity, $z$ is not adjacent to $x_2$ and, therefore, must be adjacent to $x_1$ as $z\in Z_Q$. 

\item[Obs. 2]
If two vertices $z_1$ and $z_2$  distinct from $z$ are adjacent to $[x_1]$, then no vertex is adjacent to $[x_2]$ and $N_Q(z) = \{x_0, x_2, x_3\}$. This argument is symmetric to Observation 1.

\end{itemize}

Suppose that  $z$ exists.\\
If $z'$ exists, by Observations 1 and 2 (and their analog for $z'$) and the properties of $Q$ from Claim~\ref{greatpath}, the path $Q$ is good in $G-z'$ because
it is great in $G-\{z, z'\}$. Hence  $Q$ is magic, a contradiction to Claim~\ref{nomagic}.
Hence $z'$ does not exists.\\ 
By Claim~\ref{nodiag}, $w_2$ is not adjacent to $x_{p-1}$ and $w_1$ is not adjacent to $x_{p}$ since
$dist(C_1,C_2)\geq 2$. So, by planarity of $G-\{w_1,w_2\}$, at most two vertices are adjacent to $[x_p]$.
Let $y$ be a vertex adjacent to $[x_p]$.
The path $Q$ is  not great in $G-\{y,z\}$, for otherwise it would be magic.
Hence, according to the properties of $Q$ and the above observations, there must be two vertices adjacent to $[x_p]$, two vertices adjacent to $[x_{p-1}]$ and one vertex adjacent to $[x_{p-2}]$.
Let $z_1$ and $z_2$ be the two vertices adjacent to $[x_{p-1}]$ and $R_1 \dots R_5$ be the regions as 
in Figure~\ref{sameneigh} with $i=p-1$.
Since there is a vertex adjacent to $[x_{p-2}]$, then $C_1$ is in $R_1$ or $R_3$, and
$C_2$ is in $R_2$ or $R_4$ because a vertex is adjacent to $[x_p]$.
But by Claim~\ref{sep4} the $4$-cycle $z_1x_pz_2x_{p-2}$ is not nicely separating, so there is no vertex inside $R_5$.
Since $G$ is triangulated, and $x_{p-2}x_p$ is not an edge because $P$ is a shortest $(C_1,C_2)$-path,
$z_1z_2\in E(G)$.
Now the path $Q$ is good in $G - \{z_1,z_2\}$ and so is magic. This contradicts Claim~\ref{nomagic}.

Hence we may assume that $z$ does not exists and by symmetry that $z'$ does not exist. We get a contradiction similarly by considering a vertex $w$ adjacent to $[x_1]$ in place of $z$.
\end{proof}

\begin{claim}\label{unseul}
There is precisely one vertex $z\in Z_Q$ adjacent to both $x_0$ and $x_4$. 
\end{claim}
\begin{proof}
Observe that there are at most two vertices adjacent to $x_0$ and $x_4$.
Indeed such vertices cannot be in the crossings because $dist(C_1,C_2)=2$. Thus if there were three such vertices, together with contracting  the path $x_1 x_2 x_3$ we would get $K_{3,3}$ minor in $G-\{w_1,w_2\}$, a contradiction.

Suppose by contradiction that two distinct vertices $z_1, z_2\in Z_Q$ adjacent to vertices $x_0$ and $x_4$.  The edges of $Q$ are contained in the same region of the plane bounded by the cycle $x_0 z_1 x_4 z_2$. Therefore, both crossings are also in the region containing the edges of $Q$. By Claim~\ref{4cycle}, the region bounded by the cycle $x_0 z_1 x_4 z_2$ that does not contain the crossings has no vertex in its interior. Since $G$ is triangulated, $z_1z_2\in E(G)$ as $x_0$ because $x_4$ are not adjacent as $dist(C_1,C_2)=2$.

By the property \ref{q3} of Claim~\ref{greatpath}, $z_1$ and $z_2$ cannot be both adjacent to the five vertices in $Q$. Therefore, without loss of generality, suppose $|N_Q(z_2)| \le 4$. 
Let us prove that $Q$ is great in $H=(G-z_2) \setminus \{z_1x_0,z_1x_4\}$. 

\begin{itemize}
\item[(i)] If a vertex $t$ in $G-\{z_1,z_2\}$ is adjacent to at least four vertices of $Q$, then without loss of generality it is adjacent
to $\{x_0, x_1, x_2, x_3\}$ as it cannot be adjacent to $x_0$ and $x_4$.
Now by property \ref{q3}, $z_1$ and $z_2$ are not adjacent to $x_3$.
Hence one of them (the one such that $x_0x_1x_2x_3x_4z_i$ separates $t$ from $z_{3-i}$) cannot be adjacent to
any vertex of   $\{x_1, x_2, x_3\}$, a contradiction to the fact that it is in $Z_Q$. Hence $Q$ satisfies (a) in $H$. 

\item[(ii)] If two vertices $t_1$ and $t_2$ of $H$ are adjacent to $[x_2]$, then necessarily $x_1t_1x_2t_2$ is a nicely separating, a contradiction to Claim~\ref{sep4}. Hence there is at most one vertex  of $H$ adjacent to $[x_2]$.
Thus  $Q$ satisfies (b) in $H$. 

\item[(iii)]  If two vertices $r_1$ and $r_2$ of $H$ are adjacent to $[x_1]$, then no vertex is adjacent to $[x_2]$.
Indeed suppose for a contradiction that a vertex $t$ is adjacent to $[v_2]$
none of $\{r_1,r_2, t\}$  is in $\{w_1,w_2\}$ by Claim~\ref{nodiag} and because $dist(C_1,C_2)\geq 2$.
Now contracting the path $tx_3x_4z_2$ into a vertex $w$, we obtain a $K_{3,3}$ with parts $\{r_1, r_2, w\}$ and $\{x_0,x_1,x_2\}$. This contradicts the planarity of $G$.

Symmetrically,  if two vertices of $H$ are adjacent to $[x_3]$, then no vertex is adjacent to $[x_2]$.
Therefore   $Q$ satisfies (c) in $H$. 
\end{itemize}
%This proves that  $Q$ is great in $G-\{z_1,z_2\}$. 

It follows that $Q$ is a good path in $H' = (G-z_2)\setminus z_1 x_4$. 
Let $\phi$ be a safe $L$-colouring of $Q$ in $H'$ obtained by Lemma~\ref{goodpath}. Since $Q$ meets the two crossings, $G-Q$ is planar. Furthermore, $G-Q$ can be drawn in the plane such that all vertices on the outer face are those in $N(Q)$. 
Every vertex of $Z_Q\setminus \{z_1,z_2\}$ is safe in $H'$ and so in $G$, so $|L_\phi(v)|\ge 3$.
In $H'$, $z_1$ is safe and in $G$, $z_1$ has one more neighbour in $Q$ in $G$ than $H'$, namely $x_4$.
Thus in $G$, $|L_{\phi}(z_1)|\geq 2$ because $z_1$ was safe in $H'$.
Since $z_2$ has at most four neighbours in $Q$, we have $|L_\phi(z_2)| \ge 1$. Now $z_1$ is adjacent to $z_2$, so $L_\phi$ is a $\{z_1,z_2\}$-suitable assignment for $G-Q$. Hence by Theorem~\ref{suitable}, $G-Q$ is $L_\phi$-colourable and so $G$ is $L$-colourable, a contradiction.
\end{proof}

\begin{itemize}

\item Assume first that $|N_Q(z)|=5$.
Let $H=G\setminus \{zx_0, zx_4\}$.
$z$ is the unique vertex adjacent to $x_0$ and $x_4$. Moreover by
property \ref{q3} $z$ is the unique vertex adjacent to $x_0$ and $x_3$ and the unique one adjacent to
$x_1$ and $x_4$.
Hence $Q$ satisfies (a) in $H$.
Moreover, for $1\leq i\leq 3$, there is at most one vertex distinct form $z$ adjacent to $[x_i]$ otherwise $G[V(Q)\cup Z_Q]$ would contain a $K_{3,3}$.
Hence $Q$ also satisfies (b) and (c) in $H$.
Therefore $Q$ is great in $H$.
 By Lemma~\ref{nicepath}, there exists a safe $L$-colouring $\phi$ of $Q$ in $H$. 
 Thus in $G$, every vertex in $Z_Q\setminus \{z\}$ satisfies $|L_\phi(v)|\ge 3$ while  $|L_\phi(z)|\ge 1$.
 Hence $L_\phi$ is suitable for $G-Q$. Therefore, by Theorem~\ref{suitable}, $G-Q$ is $L_\phi$-colourable and so $G$ is $L$-colourable, a contradiction.

\item Assume now that $|N_Q(z)|\leq 4$.

Suppose that there are two distinct vertices $z_1, z_2\in Z_Q$ with $z_1$ adjacent to $x_0$ and $x_3$ and $z_2$ adjacent to $x_1$ and $x_4$. Let $R_1$ be the region bounded by the cycle $x_0 x_1 x_2 x_3 z_1$ not containing $z_2$ and $R_2$ be the region bounded by the cycle $x_1 x_2 x_3 x_4 z_2$ not containing $z_1$ (see Figure~\ref{R1R2}). Now, note that any vertex adjacent to both $x_0$ and $x_4$ is not in $R_1\cup R_2$ and any vertex adjacent to $x_2$ must be in $R_1\cup R_2$. Therefore, $z\in \{z_1, z_2\}$. Indeed if this was not true, then by property~\ref{q3}  $z$ is not adjacent to $x_1$ nor $x_3$. Thus  $z$ must be adjacent to $x_2$ as it is in $Z_Q$. So $z$ is inside $R_1\cup R_2$, which contradicts the fact that it is adjacent to $x_0$ and $x_4$.
\begin{figure}[!hbt]
\begin{center}
\scalebox{.8}{\input{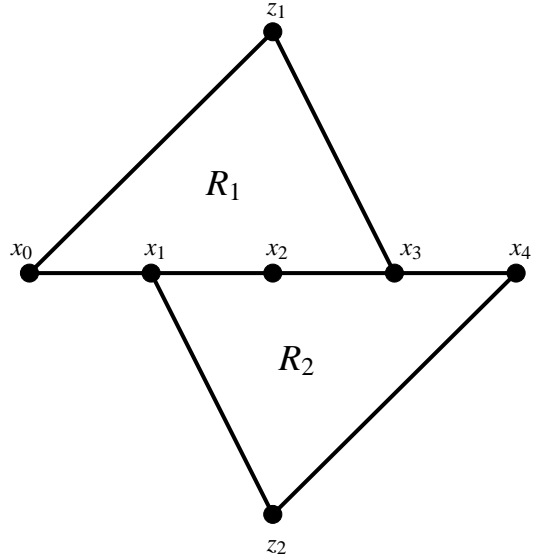}}
\caption{Regions $R_1$ and $R_2$.}\label{R1R2}
 \end{center}
\end{figure}

Thus, at most one other vertex $z'$ in $Z_Q\setminus \{z\}$ is adjacent to vertices at distance three in $Q$.
By symmetry, we may assume that $z'$ is adjacent to $x_0$ and $x_3$.
Hence all vertices  in $Z_Q\setminus \{z,z'\}$ are adjacent to some $[x_i]$ for $1\leq i\leq 3$.
Similarly to (ii) and (iii) in Claim~\ref{unseul}, one shows that
$Q$ also satisfies (a) and (b) in $(G-z)\setminus z'x_0$.
 Hence $Q$ is a good path in $G-z$.  
 Then $Q$ is magic, a contradiction to Claim~\ref{nomagic}.
\end{itemize} 

%\end{itemize}

\end{proof}

\section*{Acknowledgement}
%%%%%%%%%%%%%%%
The authors would like to thank Claudia Linhares Sales for stimulating discussions.

\end{document}